\documentclass[reqno,12pt,letterpaper]{amsart}
\usepackage{sdmacros}

\newcommand{\RR}{{\mathbb R}}


\title[Ruelle zeta function]%
{Ruelle zeta function at zero for surfaces}
\author{Semyon Dyatlov}
\email{dyatlov@math.mit.edu}
\address{Department of Mathematics, Massachusetts Institute of Technology,
 Cambridge, MA 02139}
\author{Maciej Zworski}
\email{zworski@math.berkeley.edu}
\address{Department of Mathematics, University of California, Berkeley, 
CA 94720}

\begin{document}

\begin{abstract}
We show that the Ruelle zeta function for
a negatively curved oriented surface vanishes at zero to the order 
given by the absolute value of the Euler characteristic.
This result was previously known only in constant curvature.
\end{abstract}

\maketitle


\section{introduction}

Let $ (\Sigma , g ) $ be a compact oriented Riemannian surface of negative curvature
and denote by $ \mathcal G $ the set of primitive closed geodesics on 
$ \Sigma $ (counted with multiplicity). For $ \gamma \in \mathcal G $ denote by $ \ell_\gamma $ its length. The Ruelle zeta
function \cite{Ru1} is defined
by the analogy with the Riemann zeta function,
$ \zeta ( s ) = \prod_p ( 1 - p^{-s} )^{-1} $, replacing primes 
$ p $ by primitive closed geodesics:
\begin{equation}
  \label{eq:defRuz}
\zeta_R ( s ) := \prod_{ \gamma \in \mathcal G } ( 1 - e^{ -s  \ell_\gamma } ) .
\end{equation}
The infinite product converges for $ \Re s \gg 1 $ and the meromorphic 
continuation of~$ \zeta_R $ to $\mathbb C$ has been a subject of extensive study.

Thanks to the Selberg trace formula the order of vanishing of $ \zeta_R ( s ) $
at $ 0$ has been known for a long time 
in the case of {\em constant curvature} and it is given by $ - \chi ( \Sigma ) $ where $ \chi ( \Sigma ) $ is the Euler characteristic. We show that the 
same result remains true for {\em any} negatively curved oriented surface:

\vspace{0.04in}

\noindent
{\bf Theorem.} \emph{ Let $ \zeta_R ( s ) $ be the Ruelle zeta
function for an oriented negatively curved $C^\infty$ Riemannian surface
$ ( \Sigma, g ) $ and 
let $ \chi ( \Sigma ) $ be its
Euler characteristic.
Then $s^{\chi(\Sigma)}\zeta_R(s)$ is holomorphic at $s=0$ and
\begin{equation}
\label{eq:t1}
s^{\chi(\Sigma)}\zeta_R(s)|_{s=0}\neq 0.
\end{equation}
}

\vspace{-0.05in}

\Remarks
1. The condition that the surface is $ C^\infty $ can be replaced by $ C^k $ for a sufficiently 
large $ k $~-- that is an automatic consequence of our microlocal methods. 

\noindent
2. As was pointed out to us by Yuya Takeuchi, 
our proof gives a stronger result in which the cosphere 
bundle $ S^* \Sigma = \{ ( x , \xi ) \in T^* \Sigma :
| \xi|_g = 1 \} $ is replaced by a connected contact 3-manifold~$ M$ whose
contact flow has the Anosov property 
with orientable stable and unstable bundles (see \S\S \ref{s:resonances},\ref{s:gf}).
If $ \mathbf b_1 ( M ) $ denotes the first Betti number of $ M $
(see \eqref{e:Betti}) then $ s^{ 2 - \mathbf b_1 ( M ) } \zeta_R ( s ) $
is holomorphic at $ 0 $ and 
\begin{equation}
\label{eq:b1M}   
s^{ 2 - \mathbf b_1 ( M ) } \zeta_R ( s ) |_{s=0} \neq 0 .
\end{equation}
Theorem above follows from the fact that for 
negatively curved surfaces $ 2 - \mathbf b_1 ( S^* \Sigma ) = \chi( \Sigma ) $ (see
Lemma \ref{l:topo} for the review of this standard fact).  For the 
existence of contact Anosov flows on 3-manifolds which do not arise from 
geodesic flows see \cite{HF}.

\noindent
3. Our result implies that for a negatively curved connected oriented Riemannian surface,
its length spectrum (that is, lengths of closed geodesics counted with multiplicity)
determines its genus. This appears to be a previously unknown inverse result~-- we refer
the reader to reviews~\cite{RBM,Wilkinson,Zelditch} for more information.

For $ ( \Sigma , g) $ of constant curvature the meromorphy of $ \zeta_R $ follows
from its relation to the Selberg zeta function:
$$
\zeta_S ( s ) := \prod_{ \gamma \in \mathcal G } \prod_{ m=0}^\infty
( 1 - e^{- ( m + s ) \ell_\gamma } ) , \quad
\zeta_R ( s ) = \frac{ \zeta_S ( s  ) }{ \zeta_S ( s + 1  ) } ,
$$
see for instance \cite[Theorem~5]{Mark} for a self-contained presentation. 
In this case the behaviour at $ s = 0 $ was analysed by Fried \cite[Corollary 2]{Fr1}
who showed that 
\begin{equation}
\label{eq:fried}    \zeta_R ( s ) = \pm ( 2 \pi s )^{ |\chi ( \Sigma )| } ( 1 + \mathcal O ( s ) ), \end{equation}
where $ \chi ( \Sigma ) $ is the Euler characteristic of $ \Sigma $. A far reaching
generalization of this result to locally symmetric manifolds 
has recently been provided by Shen 
\cite[Theorem 4.1]{Sh} following earlier contributions
by Bismut~\cite{Bismut}, Fried~\cite{Fr-Anator}, and Moskovici--Stanton~\cite{Moskovici}.

For real analytic metrics the meromorphic continuation of $ \zeta_R ( s ) $ 
is more recent and 
follows from results of Rugh \cite{Ru} and Fried \cite{Fr2} 
proved twenty years ago. 
In the $ C^\infty $ case (or $ C^k $ for sufficiently large $ k $) 
that meromorphic continuation is very recent.
For Anosov flows on compact manifolds it was first established 
by Giulietti--Liverani--Pollicott~\cite{glp} and then by Dyatlov--Zworski~\cite{DZ}.
See these papers and 
\cite[Chapter 4]{ZS} for more references and for background information.
 { Here we only mention two particularly 
relevant contributions:  \cite{DG} where the more complicated non-compact 
case is considered 
and \cite{River} where microlocal methods are used to 
describe the correlation function of a Morse--Smale gradient flow.}

The value at zero of the dynamical zeta function for
certain two-dimensional hyperbolic open billiards was computed by Morita~\cite{Morita}
using Markov partitions. It is possible that similar methods could work in our
setting  because of the better regularity of stable/unstable foliations
in dimensions 2. However, our spectral approach is more direct and,
as it does not rely on regularity of the stable/unstable foliations,
can be applied in higher dimensions.

The first step of our proof is the standard factorization of $ \zeta_R $ 
which shows that the multiplicity of the zero (or pole) of $ \zeta_R $ 
can be computed from the multiplicities of Pollicott--Ruelle resonances
of the generator of the flow, $ X$,  acting on differential forms~-- see~\S\S\ref{s:resonances},\ref{s:zeta}. The resonances are defined as eigenvalues
of $ X $ acting on microlocally weighted spaces 
-- see~\eqref{e:mercon} which we recall from the 
work of Faure--Sj\"ostrand \cite{fa-sj} and \cite{DZ}.
The key fact, essentially from \cite{fa-sj} -- see \cite[Lemma 5.1]{DGF} 
and Lemma \ref{l:res-states} below~-- is that 
the generalized eigenvalue problem is equivalent to solving the equation
$ ( X + s )^k u = 0 $ under a {\em wavefront set condition}. 
We should stress that the origins of this method lie
in the works on anisotropic Banach spaces by
Baladi~\cite{bal},
Baladi--Tsujii~\cite{bats},
Blank--Keller--Liverani~\cite{b-k-l},
Butterley--Liverani~\cite{but-liv},
Gou\"ezel--Liverani~\cite{go-liv}, and
Liverani~\cite{liverani,liverani2}.

Hence we need to show that the multiplicities of generalized eigenvalues at $s=0$
are the same as in the 
case of constant curvature surfaces (for detailed analysis of
Pollicott--Ruelle resonances in that case we refer to \cite{DGF} and \cite{g-w}).
For functions and 2-forms that is straightforward. For 1-forms 
the dimension of the eigenspace turns out to be easily computable using
the behaviour of $ ( X + s)^{-1} $ near $ 0 $ acting on functions
and is given by the first Betti number. That
is done in \S \ref{s:1fo} and it works for any 
contact Anosov flow on a 3-manifold.
In the case of orientable stable and
unstable manifolds that gives holomorphy of 
$ s^{ 2 - \mathbf b_1 ( M ) } \zeta ( s ) $ at $ s = 0$. 

To show \eqref{eq:b1M}, that is to see that the order of vanishing is exactly $ 2 - \mathbf b_1 ( M )  $, we need
to show that zero is a semisimple eigenvalue, that is its algebraic and geometric
multiplicities are equal. The key ingredient
 is a regularity result given in Lemma~\ref{l:mystery}. It holds
for any Anosov flow preserving a smooth density and could be of independent
interest.

\medskip\noindent\textbf{Acknowledgements.}
We gratefully acknowledge partial support by a Clay Research Fellowship
(SD) and by the National Science Foundation grant DMS-1500852 (MZ). 
We would also like to thank Richard Melrose for suggesting the proof of
Lemma \ref{l:de-rham}, Fr\'ederic Naud for informing us of 
reference \cite{Morita} and the anonymous referee for helpful comments. We are particularly grateful to Yuya Takeuchi
for pointing out that a topological assumption made in an earlier version was
unnecessary -- that lead to the stronger result described in Remark 2 above.

\section{Ingredients}
\label{gepr}

\subsection{Microlocal analysis}

Our proofs rely on microlocal analysis, and we briefly describe microlocal tools
used in this paper providing detailed references to~\cite{ho1,ho3,ev-zw,DZ} and
\cite[Appendix E]{res}.

Let $M$ be a compact smooth manifold and $\mathcal E,\mathcal F$ smooth vector bundles over $M$.
For $k\in\mathbb R$, denote by $\Psi^k(M;\Hom(\mathcal E,\mathcal F))$ the class of pseudodifferential
operators of order~$k$ on~$M$ with values in homomorphisms $\mathcal E\to\mathcal F$
and symbols in the class $S^k$; see for instance~\cite[\S 18.1]{ho3}
and~\cite[\S C.1]{DZ}.
These operators act 
\begin{equation}
  \label{e:acting}
C^\infty(M;\mathcal E)\to C^\infty(M;\mathcal F),\quad
\mathcal D'(M;\mathcal E)\to \mathcal D'(M;\mathcal F)
\end{equation}
where $C^\infty(M;\mathcal E)$ denotes the space of smooth sections
and $\mathcal D'(M;\mathcal E)$ denotes the space of distributional sections
\cite[\S 6.3]{ho1}. 
For $k\in\mathbb N_0$, the class $\Psi^k$ includes all smooth differential operators of order $k$.
To each $\mathbf A\in \Psi^k(M;\Hom(\mathcal E;\mathcal F))$ we associate its principal symbol
$$
\sigma(\mathbf A)\in S^k(M;\Hom(\mathcal E;\mathcal F))/S^{k-1}(M;\Hom(\mathcal E;\mathcal F))
$$
and its wavefront set
$
\WF(\mathbf A)\ \subset\ T^*M\setminus 0,
$
which is a closed conic set. Here $T^*M\setminus 0$ denotes the cotangent
bundle of $M$ without the zero section. In the case of $\mathcal E=\mathcal F$ we use the
notation
$
\End(\mathcal E)=\Hom(\mathcal E;\mathcal E).
$
For a distribution $\mathbf u\in \mathcal D'(M;\mathcal E)$, its wavefront
set
$$
\WF(\mathbf u)\subset T^*M\setminus 0
$$
is a closed conic set defined as follows: a point $(x,\xi)\in T^*M\setminus 0$
does \emph{not} lie in~$\WF(\mathbf u)$ if and only if there exists an open conic
neighborhood $U$ of $(x,\xi)$ such that
$\mathbf A \mathbf u\in C^\infty(M;\mathcal E)$ for each
$\mathbf A\in \Psi^k(M;\End(\mathcal E))$ satisfying $\WF(\mathbf A)\subset U$.
See \cite[Theorem 18.1.27]{ho3} for more details. 

{ The above abstract definition is 
useful in this paper but for the reader's convenience we recall the more intuitive local definition in the case of distributions on $ \RR^n $ (see \cite[Definition 8.1.2]{ho1}): if $ u \in \mathcal D' ( \RR^n ) $ and $ ( x, \xi ) 
\in T^* \RR^n \setminus 0=\RR^n \times ( \RR^n \setminus \{0\} ) $ then
\[  ( x, \xi ) \notin \WF ( u ) \ \Longleftrightarrow \ 
\left\{ \begin{array}{c} 
\exists \, \varphi \in C^\infty_{\rm{c}} ( \RR^n ), \ \varphi ( x  ) \neq 0, \ 
 \varepsilon > 0 \ \text{ such that } \\
 | \widehat {\varphi u } (\eta ) | = \mathcal O ( \langle \eta \rangle^{-\infty} )
 \ \text{ for } \ \big| \eta/|\eta | - \xi/|\xi | \big| < \varepsilon . 
\end{array} \right.
\]
Here $ \langle \eta \rangle := ( 1 + |\eta|^2 )^{\frac12} $ and 
$ \mathcal O ( \langle \eta \rangle^{-\infty} ) $ means that the left hand side is 
bounded by $ C_N \langle \eta \rangle^{-N } $ for any $ N $. Since the decay of
the Fourier transform, $ \hat v $, corresponds to regularity of a distribution $ v $, 
this provides ``localized" information both in the position variable $ x $ (thanks to the cutoff $ \varphi $) and in the frequency variable $ \eta $ (thanks to the localization to the cone $ \big| \eta/|\eta| - \xi/|\xi| \big| < \epsilon $).
}

The wavefront set is preserved by
pseudodifferential operators: that is,
\begin{equation}
  \label{e:module-1}
\mathbf A\in \Psi^k(M;\Hom(\mathcal E,\mathcal F)),\
\mathbf u\in \mathcal D'(M;\mathcal E)\ \Longrightarrow\
\WF(\mathbf A\mathbf u)\subset \WF(\mathbf A)\cap\WF(\mathbf u).
\end{equation}
Following~\cite[\S 8.2]{ho1}, for a closed conic set $\Gamma\subset T^*M\setminus 0$ we consider
the space
\begin{equation}
  \label{e:hormander-space}
\mathcal D'_\Gamma(M;\mathcal E)=\{\mathbf u\in \mathcal D'(M;\mathcal E)\colon \WF(\mathbf u)\subset\Gamma\}
\end{equation}
and note that by~\eqref{e:module-1} this space is preserved by pseudodifferential operators.

We also consider the class $\Psi^k_h(M;\Hom(\mathcal E;\mathcal F))$ of semiclassical pseudodifferential
operators with symbols in class $S^k_h$.
The elements of this class are families of operators on~\eqref{e:acting} depending
on a small parameter $h>0$. To each $\mathbf A\in\Psi^k_h(M;\Hom(\mathcal E;\mathcal F))$
correspond its semiclassical principal symbol and wavefront set
$$
\sigma_h(\mathbf A)\in S^k_h(M;\Hom(\mathcal E;\mathcal F))/
hS^{k-1}_h(M;\Hom(\mathcal E;\mathcal F)),\quad
\WFh(\mathbf A)\subset \overline T^*M
$$
where $\overline T^*M$ is the fiber-radially compactified cotangent bundle,
see for instance \cite[\S E.1]{res}. For a tempered $h$-dependent family of distributions
$\mathbf u(h)\in \mathcal D'(M;\mathcal E)$, we can define its wavefront set
$
\WFh(\mathbf u)\subset \overline T^*M.
$

We denote by $\Psi^{\comp}_h(M)\subset \bigcap_k \Psi^k_h(M)$
the class of compactly microlocalized semiclassical pseudodifferential operators,
see \cite[Definition E.29]{res}. 

\subsection{Differential forms}

Let $M$ be a compact oriented manifold.
Denote by $\Omega^k$ the complexified vector bundle of differential $k$-forms
on $M$.
The de Rham cohomology spaces are defined as the quotients
of the spaces of closed forms by the spaces of exact forms, that is
$$
\mathbf H^k(M;\mathbb C)={\displaystyle\{\mathbf u\in C^\infty(M;\Omega^k)\colon d\mathbf u=0\}
\over \displaystyle\{d\mathbf v\colon \mathbf v\in C^\infty(M;\Omega^{k-1})\}}.
$$
These are finite dimensional vector spaces over $\mathbb C$, with the dimensions
\begin{equation}
  \label{e:Betti}
\mathbf b_k(M): =\dim\mathbf H^k(M;\mathbb C)
\end{equation}
called $k$-th \emph{Betti numbers}. (It is convenient for us to study cohomology
over $\mathbb C$, which is of course just the complexification of the cohomology over $\mathbb R$.)

De Rham cohomology is typically formulated in terms of smooth differential forms.
However, the next lemma shows that one can use instead the classes $\mathcal D'_\Gamma$:
\begin{lemm}
  \label{l:de-rham}
Let $\Gamma\subset T^*M\setminus 0$ be a closed conic set.
Using the notation \eqref{e:hormander-space}, assume that
$
\mathbf u\in \mathcal D'_\Gamma(M;\Omega^k)$, 
$d\mathbf u\in C^\infty(M;\Omega^{k+1}).
$

Then there exist
$
\mathbf v\in C^\infty(M;\Omega^k)$ and
$\mathbf w\in \mathcal D'_\Gamma(M;\Omega^{k-1})$ such that 
$\mathbf u=\mathbf v+d\mathbf w.
$
\end{lemm}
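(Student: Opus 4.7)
The plan is to reduce to classical Hodge theory on a compact Riemannian manifold, taking advantage of the fact that the Green's operator of the Hodge Laplacian is pseudodifferential, hence preserves the wavefront set.

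First I would fix an auxiliary Riemannian metric on $M$, which yields the Hodge Laplacian $\Delta=dd^*+d^*d$ acting on each $\Omega^k$, the finite-rank orthogonal projector $H$ onto harmonic forms (which is smoothing), and the Green's operator $G\in\Psi^{-2}(M;\End(\Omega^k))$ satisfying $\Delta G=G\Delta=I-H$. Applied to $\mathbf u$ this gives the distributional Hodge decomposition
\begin{equation*}
\mathbf u=H\mathbf u+dd^*G\mathbf u+d^*dG\mathbf u,
\end{equation*}
which holds for $\mathbf u\in\mathcal D'$ since both sides are pseudodifferential operators applied to $\mathbf u$.

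Next I would use that $d$ and $d^*$ commute with $\Delta$, together with $dH=Hd=0$ and $d^*H=Hd^*=0$ (harmonic forms are closed and coclosed), to conclude $dG=Gd$ and $d^*G=Gd^*$ on distributions; this is first checked on smooth forms by the standard argument that $dG-Gd$ lands in the harmonic space and is also orthogonal to it, and then extended by duality and continuity. Consequently
\begin{equation*}
d^*dG\mathbf u=Gd^*d\mathbf u.
\end{equation*}
By hypothesis $d\mathbf u\in C^\infty(M;\Omega^{k+1})$, and since $d^*$ is a differential operator, $d^*d\mathbf u\in C^\infty$; applying $G$ keeps us in $C^\infty$. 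Thus setting
\begin{equation*}
\mathbf v:=H\mathbf u+Gd^*d\mathbf u\in C^\infty(M;\Omega^k),\qquad \mathbf w:=d^*G\mathbf u\in\mathcal D'(M;\Omega^{k-1}),
\end{equation*}
we obtain $\mathbf u=\mathbf v+d\mathbf w$.

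Finally I would control the wavefront set of $\mathbf w$: since $d^*G\in\Psi^{-1}(M;\Hom(\Omega^k,\Omega^{k-1}))$, the pseudodifferential wavefront containment \eqref{e:module-1} gives $\WF(\mathbf w)\subset\WF(\mathbf u)\subset\Gamma$, so $\mathbf w\in\mathcal D'_\Gamma(M;\Omega^{k-1})$, as required. The only potentially delicate point is justifying the commutation $[d,G]=0=[d^*,G]$ on distributions; this is really the heart of the lemma, but it is essentially formal given that $G$ is defined through the self-adjoint elliptic operator $\Delta$ which commutes with $d$ and $d^*$ on smooth forms, and the identities pass to distributions by testing against smooth forms.
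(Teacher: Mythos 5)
Your proof is correct and uses the same underlying idea as the paper (Hodge theory plus the fact that pseudodifferential operators preserve the wavefront set, so that $\mathbf w$ is obtained from $\mathbf u$ by an operator in $\Psi^{-1}$). The mechanism is slightly different in one step, though. You work with the exact Green's operator $G$ and the harmonic projector $H$, and the work goes into justifying the commutation $dG=Gd$, $d^*G=Gd^*$ on distributions (which is correct, by density of $C^\infty$ and continuity of pseudodifferential operators on $\mathcal D'$, once checked on smooth forms by the orthogonality-to-harmonics argument). The paper instead uses only a parametrix $\mathbf Q_k$ of $\mathbf\Delta_k$, sets $\mathbf w:=\delta\mathbf Q_k\mathbf u$, and proves smoothness of the remainder $\mathbf u-d\mathbf w$ by an elliptic-regularity bootstrap: first $\mathbf u-\delta d\mathbf Q_k\mathbf u-d\mathbf w=\mathbf u-\mathbf\Delta_k\mathbf Q_k\mathbf u\in C^\infty$, then $\mathbf\Delta_{k+1}(d\mathbf Q_k\mathbf u)=d(\mathbf\Delta_k\mathbf Q_k\mathbf u)\in C^\infty$ (using $d\mathbf u\in C^\infty$), so $d\mathbf Q_k\mathbf u\in C^\infty$ and hence $\delta d\mathbf Q_k\mathbf u\in C^\infty$. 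The paper's route avoids both the need to know that the exact Green's operator is pseudodifferential and the commutation identities; your route is a bit more conceptually transparent (exact Hodge decomposition) at the price of invoking those two standard but not entirely trivial facts. Both are fine.
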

\begin{proof}
Fix a smooth Riemannian metric on $M$ { and recall that
it defines an inner product on $ C^\infty ( M ; \Omega^k ) $. Since
$ d : C^\infty ( M ; \Omega^k ) \to C^\infty ( M ; \Omega^{k+1} ) $, we obtain the adjoint operator $ \delta : \mathcal D' ( M ; 
\Omega^{k+1} )  \to \mathcal D' ( M ; \Omega^k ) $.} 
 We use Hodge theory,
in particular the fact that the Hodge Laplacian
$
\mathbf\Delta_k:=d\delta+\delta d:\mathcal D'(M;\Omega^k)\to \mathcal D'(M;\Omega^k)
$
is a second order differential operator with scalar
principal symbol $\sigma(\mathbf\Delta_k)(x,\xi)=|\xi|^2_g$.
By the elliptic parametrix construction (see \cite[Theorem 18.1.24]{ho3}) there exists
a pseudodifferential operator
$
\mathbf Q_k\in \Psi^{-2}(M;\End(\Omega^k))
$
such that
\begin{equation}
  \label{e:paramet}
\mathbf Q_k\mathbf \Delta_k-I, \ \mathbf \Delta_k\mathbf Q_k-I:\mathcal D'(M;\Omega^k)\to C^\infty(M;\Omega^k).
\end{equation}
Using~\eqref{e:module-1} we now take 
$
\mathbf w:=\delta \mathbf Q_k\mathbf u\in\mathcal D'_\Gamma(M;\Omega^{k-1}).
$

Then by~\eqref{e:paramet}
$$
\mathbf u-\delta d \mathbf Q_k\mathbf u-d\mathbf w=\mathbf u-\mathbf \Delta_k\mathbf Q_k \mathbf u\in C^\infty(M;\Omega^k).
$$
Since $d\mathbf u\in C^\infty(M;\Omega^{k+1})$, we have
$$
\mathbf \Delta_{k+1}(d\mathbf Q_k\mathbf u)=d(\mathbf\Delta_k \mathbf Q_k\mathbf u)\in C^\infty(M;\Omega^{k+1}).
$$
By~\eqref{e:paramet} this implies that $d\mathbf Q_k\mathbf u\in C^\infty(M;\Omega^{k+1})$
and thus $\delta d\mathbf Q_k\mathbf u\in C^\infty(M;\Omega^k)$, 
giving $\mathbf v:=\mathbf u-d\mathbf w\in C^\infty(M;\Omega^k)$.
\end{proof}

\subsection{Pollicott--Ruelle resonances}
  \label{s:resonances}

We now follow \cite{fa-sj,DZ} and recall  a microlocal approach to Pollicott--Ruelle resonances.
Let $M$ be a compact manifold and $X$ be a smooth vector field
on $M$. We assume that $e^{tX}$ is an Anosov flow, that is each tangent
space $T_xM$ admits a stable/unstable decomposition
$$
T_xM=\mathbb RX(x)\oplus E_u(x)\oplus E_s(x),\quad
x\in M,
$$
where $E_u(x),E_s(x)$ are subspaces of $T_xM$ depending continuously on $x$
and invariant under the flow and for some constants $C,\nu>0$ and
a fixed smooth metric on $M$,
\begin{equation}
  \label{e:Anosov}
|de^{tX}(x)\cdot v|\leq Ce^{-\nu|t|}\cdot |v|,\quad
\begin{cases}t\geq 0,& v\in E_s(x),\\
t\leq 0,& v\in E_u(x).\end{cases}
\end{equation}
We consider the dual decomposition
$$
T_x^*M=E_0^*(x)\oplus E_u^*(x)\oplus E_s^*(x),
$$
where $E_0^*(x),E_u^*(x),E_s^*(x) $ are dual to $\mathbb RX( x ) ,E_s( x ) ,E_u( x ) $. In particular,
$E_u^*(x)$ is the annihilator of $\mathbb RX(x)\oplus E_u(x)$
and $E_u^* := \bigcup_{ x \in M } E_u^* ( x ) \subset T^*M$ is a closed conic set.

Assume next that $\mathcal E$ is a smooth complex vector bundle over $M$ and
$$
\mathbf P:C^\infty(M;\mathcal E)\to C^\infty(M;\mathcal E)
$$
is a first order differential operator whose principal part is given by $-iX$,
that is
\begin{equation}
  \label{e:principal-part}
\mathbf P(\varphi \mathbf u)=-(iX\varphi)\mathbf u+\varphi(\mathbf P\mathbf u),\quad
\varphi\in C^\infty(M),\quad
\mathbf u\in C^\infty(M;\mathcal E).
\end{equation}
For $\lambda\in\mathbb C$ with sufficiently large $\Im\lambda$, the integral
\begin{equation}
  \label{e:uhp}
\mathbf R(\lambda):=i\int_0^\infty e^{i\lambda t}e^{-it\mathbf P}\,dt:L^2(M;\mathcal E)\to L^2(M;\mathcal E)
\end{equation}
converges and defines a bounded operator on $L^2$, holomorphic in $\lambda$; in fact,
$\mathbf R(\lambda)=(\mathbf P-\lambda)^{-1}$ on $L^2$.

\renewcommand\thefootnote{\dag}%

The operator $\mathbf R(\lambda)$ admits a meromorphic continuation to the entire complex plane,
\begin{equation}
  \label{e:mercon}
\mathbf R(\lambda):C^\infty(M;\mathcal E)\to \mathcal D'(M;\mathcal E),\quad
\lambda\in\mathbb C,
\end{equation}
and the poles of this meromorphic continuation are the \emph{Pollicott--Ruelle resonances}%
\footnote{To be consistent with \cite{DZ} we use the spectral parameter $\lambda=is$ where
$s$ is the parameter used in~\S1. Note that $\Re s \gg 1 $ corresponds to 
$ \Im \lambda \gg 1 $.}
of the operator $\mathbf P$. See for instance~\cite[\S3.2]{DZ}
and~\cite[Theorems~1.4,1.5]{fa-sj}.

To define the multiplicity of a Pollicott--Ruelle resonance $\lambda_0$,
we use the Laurent expansion of $\mathbf R$ at $\lambda_0$ given by~\cite[Proposition~3.3]{DZ}:
\begin{equation}
\label{eq:proj}
\mathbf R(\lambda)=\mathbf R_H(\lambda)-\sum_{j=1}^{J(\lambda_0)} {(\mathbf P-\lambda_0)^{j-1}\Pi\over (\lambda-\lambda_0)^j},\quad
\mathbf R_H(\lambda),\Pi:\mathcal D'_{E_u^*}(M;\mathcal E)\to\mathcal D'_{E_u^*}(M;\mathcal E),
\end{equation}
where $\mathbf R_H(\lambda)$ is holomorphic at $\lambda_0$,
$\Pi$ is a finite rank operator,
and $\mathcal D'_{E_u^*}(M;\mathcal E)$ is defined using~\eqref{e:hormander-space}.
The fact that $\mathbf R_H(\lambda),\Pi$ can be extended to continuous operators
on $\mathcal D'_{E_u^*}$ follows from
the restrictions on their wavefront sets given in~\cite[(3.7)]{DZ} together with~\cite[Theorem~8.2.13]{ho1}.
The multiplicity of $\lambda_0$, denoted $m_{\mathbf P}(\lambda_0)$, is defined as the dimension of the range of $\Pi$.

The multiplicity of a resonance can be
computed using generalized resonant states.
Here we only need the following special case:
\begin{lemm}
  \label{l:res-states}
Define the space of resonant states at $\lambda_0\in\mathbb C$,
$$
\Res_{\mathbf P}(\lambda_0)=\{\mathbf u\in \mathcal D'_{E_u^*}(M;\mathcal E)\colon
(\mathbf P-\lambda_0)\mathbf u=0\}.
$$
Then $m_{\mathbf P}(\lambda_0)\geq\dim\Res_{\mathbf P}(\lambda_0)$. Moreover
we have $m_{\mathbf P}(\lambda_0)=\dim\Res_{\mathbf P}(\lambda_0)$ under the following
semisimplicity condition:
\begin{equation}
  \label{e:rs-condition}
\mathbf u\in \mathcal D'_{E_u^*}(M;\mathcal E),\quad
(\mathbf P-\lambda_0)^2\mathbf u=0\quad\Longrightarrow\quad
(\mathbf P-\lambda_0)\mathbf u=0.
\end{equation}
\end{lemm}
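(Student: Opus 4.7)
The plan is to deduce both statements from two algebraic identities on $\mathcal D'_{E_u^*}(M;\mathcal E)$ obtained by substituting the Laurent expansion \eqref{eq:proj} into the resolvent identities $(\mathbf P-\lambda)\mathbf R(\lambda)=I$ and $\mathbf R(\lambda)(\mathbf P-\lambda)=I$.

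I would first write $\mathbf P-\lambda=(\mathbf P-\lambda_0)-(\lambda-\lambda_0)$ and insert \eqref{eq:proj} into $(\mathbf P-\lambda)\mathbf R(\lambda)=I$; the singular part of the resulting expression telescopes, and matching coefficients in powers of $\lambda-\lambda_0$ yields
\[
(\mathbf P-\lambda_0)^{J(\lambda_0)}\Pi=0,\qquad I=(\mathbf P-\lambda_0)\mathbf R_H(\lambda_0)+\Pi.
\]
The same computation applied to $\mathbf R(\lambda)(\mathbf P-\lambda)=I$ gives the companion identity $I=\mathbf R_H(\lambda_0)(\mathbf P-\lambda_0)+\Pi$. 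The inclusion $\Res_{\mathbf P}(\lambda_0)\subset\Ran\Pi$ is then immediate: for any $\mathbf u\in\Res_{\mathbf P}(\lambda_0)$, the companion identity gives $\mathbf u=\mathbf R_H(\lambda_0)(\mathbf P-\lambda_0)\mathbf u+\Pi\mathbf u=\Pi\mathbf u$, which establishes $m_{\mathbf P}(\lambda_0)\geq\dim\Res_{\mathbf P}(\lambda_0)$.

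For the reverse inclusion under~\eqref{e:rs-condition}, I would take $\mathbf u\in\Ran\Pi\subset\mathcal D'_{E_u^*}$ and observe that the first identity forces $(\mathbf P-\lambda_0)^{J(\lambda_0)}\mathbf u=0$. Since $\mathbf P$ is a differential operator it preserves wavefront sets, so each iterate $(\mathbf P-\lambda_0)^k\mathbf u$ remains in $\mathcal D'_{E_u^*}$, and applying~\eqref{e:rs-condition} to $(\mathbf P-\lambda_0)^{J-2}\mathbf u$ reduces the exponent from $J$ to $J-1$. Iterating lowers it to $1$, so $\mathbf u\in\Res_{\mathbf P}(\lambda_0)$ and equality of dimensions follows.

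The one point requiring genuine care is promoting the two displayed identities from $C^\infty$ to $\mathcal D'_{E_u^*}$. This will rely on the boundedness of both $\mathbf R_H(\lambda_0)$ and $\Pi$ on $\mathcal D'_{E_u^*}$ recorded in~\eqref{eq:proj}, together with the fact that $\mathbf P-\lambda_0$ preserves this space; given these continuity statements the telescoping is purely formal and no further analytic input is needed.
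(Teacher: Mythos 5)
Your first half (deriving $(\mathbf P-\lambda_0)^{J}\Pi=0$ from $(\mathbf P-\lambda)\mathbf R(\lambda)=I$ and running the induction on~\eqref{e:rs-condition} to place $\Ran\Pi$ inside $\Res_{\mathbf P}(\lambda_0)$) is exactly the paper's argument. For the inclusion $\Res_{\mathbf P}(\lambda_0)\subset\Ran\Pi$ you take a genuinely different, more algebraic route: you extract a ``companion identity'' $I=\mathbf R_H(\lambda_0)(\mathbf P-\lambda_0)+\Pi$ from the other resolvent equation and then apply it to a resonant state. The paper instead shows that a resonant state $\mathbf u$ belongs to the anisotropic Sobolev space $H_{sG}$, where the resolvent is realized as a genuine bounded inverse, computes $(\mathbf P-\lambda)^{-1}\mathbf u=(\lambda_0-\lambda)^{-1}\mathbf u$ there, and reads off $\Pi\mathbf u=\mathbf u$ by matching Laurent coefficients. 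Your formulation is cleaner and more symmetric; the paper's buys a ready-made Hilbert-space setting in which the density argument is unambiguous.

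The place where you wave your hand --- ``given these continuity statements the telescoping is purely formal and no further analytic input is needed'' --- is in fact where the real work sits. The identity $I=\mathbf R_H(\lambda_0)(\mathbf P-\lambda_0)+\Pi$ is first obtained on $C^\infty$; to apply it to $\mathbf u\in\mathcal D'_{E_u^*}$ you must extend it by density, and the subtlety is what topology you are using. The space $\mathcal D'_{E_u^*}$ carries H\"ormander's topology, which is not metrizable; $C^\infty$ is sequentially dense but the continuity of $\mathbf R_H(\lambda_0)$ and $\Pi$ asserted after \eqref{eq:proj} is with respect to this topology, and one must check that the sequential approximation interacts correctly with every operator appearing in the identity, including $\mathbf P-\lambda_0$. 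This is not automatic from the statement that $\mathbf R_H(\lambda_0),\Pi$ ``extend.'' The paper sidesteps exactly this by dropping into the Banach space $H_{sG}$, where $C^\infty$ is dense, the resolvent is a bounded inverse, and no delicate topology on distribution spaces is needed. Your argument becomes complete once you either invoke the $H_{sG}$ framework as the paper does, or else carefully verify the density and continuity statements in H\"ormander's topology; without one of these, the step from $C^\infty$ to $\mathcal D'_{E_u^*}$ is a genuine gap, not a formality.
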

\begin{proof}
We first assume that~\eqref{e:rs-condition} holds and prove that $m_{\mathbf P}(\lambda_0)\leq\dim\Res_{\mathbf P}(\lambda_0)$.
We have $(\mathbf P-\lambda)\mathbf R(\lambda)=I$ and thus
$(\mathbf P-\lambda_0)^{J(\lambda_0)}\Pi=0$. Take $\mathbf u$ in the range
of $\Pi$, then $\mathbf u\in\mathcal D'_{E_u^*}(M;\mathcal E)$ by
the mapping property in~\eqref{eq:proj} and $(\mathbf P-\lambda_0)^{J(\lambda_0)}\mathbf u=0$.
Arguing by induction using~\eqref{e:rs-condition} we obtain $\mathbf u\in\Res_{\mathbf P}(\lambda_0)$,
finishing the proof.

It remains to show that $\dim\Res_{\mathbf P}(\lambda_0)\leq m_{\mathbf P}(\lambda_0)$.
For that it suffices to prove that
\begin{equation}
  \label{e:reproduce}
\mathbf u\in\Res_{\mathbf P}(\lambda_0)\quad\Longrightarrow\quad \mathbf u=\Pi\mathbf u.
\end{equation}
We recall from~\cite[\S\S3.1,3.2]{DZ} that $\mathbf R(\lambda)$
is the restriction to $C^\infty$ of the inverse of the operator
\begin{equation}
  \label{e:direct}
\mathbf P-\lambda:\{\mathbf v\in H_{sG}(M;\mathcal E)\colon \mathbf P\mathbf v\in H_{sG}(M;\mathcal E)\}
\to H_{sG}(M;\mathcal E),
\end{equation}
where $H_{sG}(M;\mathcal E)\subset\mathcal D'(M;\mathcal E)$ is a specially constructed anisotropic Sobolev space
and we may take any $s>s_0$ where $s_0$ depends on $\lambda$.
Take $s>s_0$ large enough so that $\mathbf u$ lies in the usual Sobolev space $H^{-s}(M;\mathcal E)$.
Since $H_{sG}$ is equivalent to $H^{-s}$
microlocally near $E_u^*$ (see~\cite[(3.3),(3.4)]{DZ}),
we have $\mathbf u\in H_{sG}$.
We compute $(\mathbf P-\lambda)^{-1}\mathbf u=(\lambda_0-\lambda)^{-1}\mathbf u$ for
$\mathbf u\in\Res_{\mathbf P}(\lambda_0)$
and the space $C^\infty$ is dense in $H_{sG}\cap\mathcal D'_{E_u^*}$, thus~\eqref{e:reproduce} follows
from the Laurent expansion~\eqref{eq:proj} applied to $\mathbf u$.
\end{proof}
We finish this section with the following analogue of Rellich's uniqueness theorem in scattering
theory: vanishing of radiation patterns implies rapid decay. { To see 
the connection we refer to the discussion around \cite[(3.6.15)]{res}: 
an outgoing solution $ u = R_0 ( \lambda ) f $, $ R_0 ( \lambda ) 
= ( - \Delta - \lambda^2-i0)^{-1} $, $ f \in C^\infty_{\rm{c}} ( \RR^n ) $, $\lambda>0$, has to have a {\em nonnegative} quantum 
flux $ - \Im \langle ( - \Delta - \lambda^2 ) u , u \rangle = \Im \langle 
R_0 ( \lambda ) f , f \rangle $. If that flux is nonpositive (and thus equal to zero), it follows that
$ u$ is rapidly decaying. In Lemma~\ref{l:mystery} below, the analogue of $ ( - \Delta - \lambda^2 ) u 
$ is $ \mathbf P \mathbf u $ and rapid decay is replaced by smoothness. 
Technically the proof is also different but the commutator argument is 
related to the commutator appearing on the left hand side of \cite[(3.6.15)]{res}.}

\begin{lemm}
\label{l:mystery}
Suppose that there exist
a smooth volume form on $M$ and a smooth inner product on the fibers of $\mathcal E$,
for which $\mathbf P^*=\mathbf P$ on $L^2(M;\mathcal E)$.
Suppose that $ \mathbf u \in \mathcal D'_{E_u^*} ( M;\mathcal E ) $ satisfies
\[ 
\mathbf P \mathbf u \in C^\infty ( M;\mathcal E ) ,\quad \Im\, \langle \mathbf P \mathbf u , \mathbf u \rangle_{L^2} \geq 0 .
\]
Then $ \mathbf u \in C^\infty ( M ;\mathcal E) $. 
\end{lemm}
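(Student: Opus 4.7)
The plan is a microlocal positive commutator / radial source estimate at the conic set $E_u^*$, in which the hypotheses $\mathbf P^*=\mathbf P$ and $\Im\langle\mathbf P\mathbf u,\mathbf u\rangle\ge0$ together play the role of the outgoing radiation condition in the Rellich identity flagged by the paper.

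The key geometric observation is that $E_u^*\subset T^*M\setminus 0$ is a radial source for the Hamilton flow of the principal symbol $p(x,\xi)=\xi(X(x))$: since $E_u^*$ annihilates $\mathbb RX\oplus E_u$ and $e^{tX}$ contracts $E_s$ at rate $e^{-\nu t}$ by \eqref{e:Anosov}, the Hamiltonian lift expands covectors in $E_u^*$ radially at rate $\geq e^{\nu t}$. Since $\WF(\mathbf u)\subset E_u^*$, the distribution $\mathbf u$ is automatically smooth microlocally off $E_u^*$, and it suffices to prove smoothness microlocally at $E_u^*$.

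I would proceed by bootstrap in Sobolev regularity. Compactness gives $\mathbf u\in H^{-N}(M;\mathcal E)$ for some $N$, and I would show that for any $s$, $\mathbf u\in H^s$ microlocally at $E_u^*$ implies $\mathbf u\in H^{s+1/2}$ microlocally at $E_u^*$. For fixed $s$ pick a self-adjoint commutant $\mathbf A=\mathrm{Op}(a)\in\Psi^{s+1/2}(M;\End(\mathcal E))$ with principal symbol $a(x,\xi)=\chi(x,\xi)\langle\xi\rangle^{s+1/2}\mathrm{Id}_{\mathcal E}$, where $\chi$ is a degree-$0$ cutoff concentrated in a small conic neighborhood of $E_u^*$. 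Using $\mathbf P^*=\mathbf P$ and $\mathbf A^*=\mathbf A$ (after a standard regularization making $\mathbf A\mathbf u\in L^2$) one has the identity
\[
\langle i[\mathbf P,\mathbf A^2]\mathbf u,\mathbf u\rangle=2\,\Im\langle\mathbf P\mathbf u,\mathbf A^2\mathbf u\rangle .
\]
The source property forces the principal symbol $H_p(a^2)$ of $i[\mathbf P,\mathbf A^2]$ to be elliptic at $E_u^*$ of order $2s+1$ with a definite sign, the errors from $d\chi$ being concentrated off $E_u^*$ where $\mathbf u$ is one order more regular by the inductive hypothesis. Sharp G\aa rding then turns the left-hand side into $\geq c\|\mathbf A\mathbf u\|^2$ modulo terms that have been controlled at the previous step.

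For the right-hand side I would write
\[
\Im\langle\mathbf P\mathbf u,\mathbf A^2\mathbf u\rangle=\Im\langle\mathbf P\mathbf A\mathbf u,\mathbf A\mathbf u\rangle+\Im\langle[\mathbf A,\mathbf P]\mathbf u,\mathbf A\mathbf u\rangle ,
\]
where the first summand is an instance of the sign hypothesis (the regularized $\mathbf A\mathbf u$ now plays the role of $\mathbf u$, with $\mathbf P$ self-adjoint), and the second is one order lower and absorbed into $\|\mathbf A\mathbf u\|^2$. Combining the two estimates forces $\|\mathbf A\mathbf u\|^2$ to be bounded by already-controlled data, which gives the gain; iterating in $s$ produces $\mathbf u\in C^\infty$ at $E_u^*$, and the $\WF$ hypothesis finishes the job. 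The main obstacle is this final sign step: translating the single scalar hypothesis $\Im\langle\mathbf P\mathbf u,\mathbf u\rangle\ge0$ into a microlocally weighted inequality requires careful tracking of the regularization and exploits that $\mathbf A^2$ is nonnegative modulo smoothing, in close analogy with the commutator identity \cite[(3.6.15)]{res} recalled just above the lemma.
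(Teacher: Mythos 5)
There is a genuine gap, and it is precisely at the point you yourself flag as ``the main obstacle'': the sign hypothesis does not transfer from $\mathbf u$ to $\mathbf A\mathbf u$. After any smoothing regularization with $\mathbf A_\epsilon\in\Psi^{-\infty}$ one has $\mathbf A_\epsilon\mathbf u\in C^\infty$, and then $\Im\langle\mathbf P\mathbf A_\epsilon\mathbf u,\mathbf A_\epsilon\mathbf u\rangle$ is \emph{exactly} zero by formal self-adjointness of $\mathbf P$ on smooth sections --- it is not ``an instance of the sign hypothesis,'' it is a trivially vanishing term. Substituting this back, your two displayed identities collapse into the tautology $\langle i[\mathbf P,\mathbf A_\epsilon^2]\mathbf u,\mathbf u\rangle=2\Re\langle i[\mathbf P,\mathbf A_\epsilon]\mathbf u,\mathbf A_\epsilon\mathbf u\rangle$, which follows directly from $\mathbf A_\epsilon^*=\mathbf A_\epsilon$ and yields no estimate. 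In particular your argument never actually invokes $\Im\langle\mathbf P\mathbf u,\mathbf u\rangle\ge0$, and an argument that does not use this hypothesis cannot work: the lemma is a genuine borderline (``Rellich'') statement, as the remark following it emphasizes. Concretely, the ellipticity you need near $E_u^*$ comes from $\chi^2H_p\langle\xi\rangle^{2s+1}$, whose sign is that of $2s+1$; at the start of your bootstrap, where $s$ is very negative, this term has the wrong sign and the G\aa rding step you describe fails. Below threshold the commutant cannot be arranged to be elliptic-positive at $E_u^*$, and above threshold the conic weight does not help you see the sign hypothesis.

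The paper's proof works around exactly this by making a structural choice of commutant that is incompatible with a classical conic cutoff: it uses a \emph{semiclassical} cutoff $F=\mathrm{Op}_h(\chi)$ whose symbol equals $1$ near $\xi=0$ and is compactly supported in $T^*M$ (not conic), together with the fact that a fixed smooth section such as $\mathbf P\mathbf u$ has $\WFh(\mathbf P\mathbf u)\subset\{\xi=0\}$. This gives $(I-F)\mathbf P\mathbf u=\mathcal O(h^\infty)_{C^\infty}$, so that $-\Im\langle F\mathbf P\mathbf u,\mathbf u\rangle=-\Im\langle\mathbf P\mathbf u,\mathbf u\rangle+\mathcal O(h^\infty)\le\mathcal O(h^\infty)$; the sign hypothesis enters here and nowhere else. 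Your conic-order-$s+1/2$ commutant $\mathbf A$ has no analogue of this step: $\mathbf A^2\mathbf P\mathbf u$ is just some smooth section, not approximately $\mathbf P\mathbf u$, so $\Im\langle\mathbf P\mathbf u,\mathbf A^2\mathbf u\rangle=\Im\langle\mathbf A^2\mathbf P\mathbf u,\mathbf u\rangle$ has no controllable sign. To repair your argument you would have to abandon the Sobolev bootstrap and pass to the semiclassical picture, which is what the paper does (their iteration is in powers of $h$, proving $\|A\mathbf u\|_{L^2}=\mathcal O(h^\infty)$ for $A\in\Psi^{\comp}_h$, rather than a gain of $1/2$ in Sobolev order). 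A minor terminological slip: $E_u^*$ is a radial \emph{sink} for $H_p$ (equivalently a radial source for $-p$, which is what the paper states and uses via~\cite[Lemma~C.1]{DZ}); your dynamical description of the radial expansion is correct, but calling it a source for $p$ reverses the convention.
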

\Remark Lemma~\ref{l:mystery} applies in particular when $\mathbf u$
is a resonant state at some $\lambda\in\mathbb R$ (replacing $\mathbf P$ by $\mathbf P-\lambda$),
showing that all such resonant states are smooth. This represents a borderline case
since for $\Im\lambda>0$ the integral~\eqref{e:uhp} converges and
thus there are no resonances.
\begin{proof}
We introduce the semiclassical parameter $h>0$ and use the following statement
relating semiclassical and nonsemiclassical wavefront sets of an $h$-independent
distribution $\mathbf v$, see~\cite[(2.6)]{DZ}:
\begin{equation}
  \label{e:wfrel}
\WF(\mathbf v)=\WFh(\mathbf v)\cap (T^*M\setminus 0).
\end{equation}
Since $\mathbf u\in \mathcal D'_{E_u^*}$ and $\mathbf P\mathbf u\in C^\infty$ we have
\begin{equation}
\label{e:wf-1}
  \WFh(\mathbf u)\cap (T^*M\setminus 0)\ \subset\ E_u^*,\quad
  \WFh(\mathbf P\mathbf u)\cap (\overline T^*M\setminus 0)\ =\ \emptyset.
\end{equation}
(The last statement uses the fiber-radially compactified cotangent bundle
and it follows immediately from the proof of~\cite[(2.6)]{DZ} in~\cite[\S C.2]{DZ}.)

It suffices to prove that for each 
$A\in \Psi^{\comp}_h(M)$ with $\WFh(A)\subset T^*M\setminus 0$,
there exists $B\in\Psi^{\comp}_h(M)$ with $\WFh(B)\subset T^*M\setminus 0$
such that
\begin{equation}
  \label{e:mystery-step}
\|A\mathbf u\|_{L^2}\leq Ch^{1/2}\|B\mathbf u\|_{L^2}+\mathcal O(h^\infty).
\end{equation}
Indeed, fix $N>0$ such that $\mathbf u\in H^{-N}$, then $\|A\mathbf u\|_{L^2}\leq Ch^{-N}$
for all $A\in\Psi^{\comp}_h(M)$. By induction~\eqref{e:mystery-step} implies that
$\|A\mathbf u\|_{L^2}=\mathcal O(h^\infty)$. This gives $\WFh(\mathbf u)\cap (T^*M\setminus 0)=\emptyset$
and thus by~\eqref{e:wfrel} $\WF(\mathbf u)=\emptyset$, that is  $\mathbf u\in C^\infty$.

To show~\eqref{e:mystery-step}, note that $h\mathbf P\in\Psi^1_h(M;\End(\mathcal E))$ and
its principal symbol is scalar and given by
$$
\sigma_h(h\mathbf P)=p,\quad
p(x,\xi)=\langle \xi,X(x)\rangle.
$$
We now claim that there exists $ \chi\in C_c^\infty(T^*M;[0,1]) $ such that
$$
\begin{gathered}
\supp(1-\chi)\subset T^*M\setminus 0,\quad
H_p\chi \leq  0 \text{ near }E_u^*,\quad
H_p\chi <0  \text{ on }E_u^*\cap \WFh(A).
\end{gathered}
$$
To construct $\chi$, we first use
part~2 of~\cite[Lemma~C.1]{DZ} (applied to $L:=E_u^*$ which is
a radial source for $-p$) to construct
$f_1\in C^\infty(T^*M\setminus 0;[0,\infty))$ homogeneous of degree~1,
satisfying $f_1(x,\xi)\geq c|\xi|$ and $H_pf_1\geq cf_1$ in a conic neighborhood
of $E_u^*$, for some $c>0$. Next we put $\chi:=\chi_1\circ f_1$ where
$ \chi_1\in C_c^\infty(\mathbb R;[0,1]) $ satisfies
$$
\begin{aligned}
\chi_1=1\text{ near }0,\quad
\chi'_1\leq 0 \text{ on }[0,\infty), \quad
\chi'_1<0 \text{ on }f_1(\WFh(A)).
\end{aligned}
$$
It is then straightforward to see that $\chi$ has the required properties.

We now quantize $\chi$ to obtain an operator
$$
F\in \Psi^{\comp}_h(M),\quad
\sigma_h(F)=\chi,\quad
\WFh(I-F)\subset \overline T^*M\setminus 0,\quad
F^*=F.
$$
Since $H_p\chi\leq 0$ near $E_u^*$ and $H_p\chi<0$ on $E_u^*\cap \WFh(A)$ there exists
\begin{gather*}
A_1\in\Psi^{\comp}_h(M), \ \ \WFh(A_1)\subset T^*M\setminus 0, \ \ 
\WFh(A_1)\cap E_u^*=\emptyset,  
\end{gather*}
such that 
\begin{equation}
\label{eq:ineq}
-{\textstyle{ 1\over 2}}H_p\chi +|\sigma_h(A_1)|^2\geq C^{-1}|\sigma_h(A)|^2
\end{equation}
where $C>0$ is some constant.

Fix $B\in\Psi^{\comp}_h(M)$ with 
$ \WFh ( B ) \subset T^* M \setminus 0 $ so  that
\begin{equation}
\label{eq:defB}
\left( \WFh\big([\mathbf P,F]\big)\cup\WFh(A_1)\cup\WFh(A) \right) \cap 
\WFh ( I - B ) = \emptyset .
\end{equation}
By the second part of \eqref{e:wf-1} we have
$(I-F)\mathbf P\mathbf u=\mathcal O(h^\infty)_{C^\infty}$.
Since $\Im\langle \mathbf P\mathbf u,\mathbf u\rangle_{L^2}\geq 0$ this gives
\begin{equation}
  \label{e:varnish}
-\Im\langle F\mathbf P\mathbf u,\mathbf u\rangle_{L^2}\leq \mathcal O(h^\infty).
\end{equation}
On the other hand, since $\mathbf P$ and $F$ are both symmetric, we get
\begin{equation}
  \label{e:varnish2}
-\Im\langle F\mathbf P\mathbf u,\mathbf u\rangle_{L^2}={\textstyle{1\over 2i}}\langle [\mathbf P,F]\mathbf u,\mathbf u\rangle_{L^2}.
\end{equation}
We now observe that
$$
{\textstyle{1\over 2i}}[\mathbf P,F]\in \Psi^{\comp}_h(M;\mathcal E),\quad
\sigma_h\left({\textstyle{1\over 2i}}[\mathbf P,F]\right)=-{\textstyle{1\over 2}}H_p\chi.
$$
Using \eqref{eq:ineq} we can apply the sharp G\r arding inequality 
(see for instance \cite[Theorem 9.11]{ev-zw}) 
to the operator ${1\over 2i}[\mathbf P,F]+A_1^*A_1-C^{-1}A^*A$
and the section $B\mathbf u$
to obtain
$$
\|A B \mathbf u\|_{L^2}^2\leq C\|A_1 B \mathbf u\|_{L^2}^2+{\textstyle{C\over 2i}}\langle B^*[\mathbf P,F]B\mathbf u,\mathbf u\rangle_{L^2}
+Ch\|B\mathbf u\|_{L^2}^2.
$$
From \eqref{eq:defB} we see that 
$ A B \mathbf u \equiv A \mathbf u $, 
$ A_1 B\mathbf u \equiv A_1 \mathbf  u $ and 
$  B^* [\mathbf P,F] B\mathbf u \equiv [\mathbf P,F] \mathbf u $, modulo $ \mathcal O ( h^\infty )_{C^\infty}$. 
Also, the first part of \eqref{e:wf-1} shows that $A_1\mathbf u=\mathcal O(h^\infty)_{L^2}$. Using~\eqref{e:varnish}
and~\eqref{e:varnish2}
we obtain~\eqref{e:mystery-step}, finishing the proof.
\end{proof}

\subsection{Contact flows and geodesic flows}
\label{s:gf}

Assume that $M$ is a compact three dimensional manifold and 
$\alpha\in C^\infty(M;\Omega^1)$ is a contact form, that is
\[
d\vol_M:=
\alpha\wedge d\alpha\neq 0\quad\text{everywhere.} 
\]
Then $d\vol_M$ fixes a volume form and an orientation on $M$.
The form $\alpha$ determines uniquely the \emph{Reeb vector field} $X$
on $M$ satisfying the conditions (with $\iota$ denoting the interior product)
\begin{equation}
\label{eq:dReeb}
\iota_X\alpha=1 \,,\quad 
\iota_X (d\alpha)=0. 
\end{equation}
We record for future use the following identity
which can be checked by applying both sides to a frame containing $X$:
\begin{equation}
  \label{e:identity}
\mathbf u\wedge d\alpha=(\iota_X \mathbf u)\,d\vol_M\quad\text{for all }
\mathbf u\in\mathcal D'(M;\Omega^1).
\end{equation}
We now consider the special case when $M$ is the unit cotangent bundle
of a compact Riemannian surface $(\Sigma,g)$:
\begin{equation}
  \label{e:circle-bundle}
M=S^*\Sigma=\{(x,\xi)\in T^*\Sigma\colon |\xi|_g=1\}.
\end{equation}
Let $ j: S^* \Sigma \hookrightarrow T^* \Sigma $ and put $ \alpha := j^* ( \xi dx ) $.
Then $\alpha$ is a contact form and the corresponding vector field $X$ is the generator
of the geodesic flow.

We recall a standard topological fact which 
will be used in passing from the Betti number of $ M = S^* \Sigma $ to 
the Euler characteristic of $ \Sigma $. It is an immediate consequence
of the Gysin long exact sequence; we provide a direct proof for the reader's convenience:
\begin{lemm}
  \label{l:topo}
Assume that $(\Sigma,g)$ is a compact connected oriented Riemannian surface of nonzero Euler characteristic,
$M$ is given by~\eqref{e:circle-bundle}, and $\pi:M\to\Sigma$ is the projection map.
Then for any 
$
\mathbf u\in C^\infty(M;\Omega^1)$ with 
$ d\mathbf u=0$ 
there exist $\mathbf v,\varphi$ such that
\begin{equation}
  \label{e:topo}
\mathbf u=\pi^* \mathbf v+d\varphi,\quad
\mathbf v\in C^\infty(\Sigma;\Omega^1),\quad
d\mathbf v=0,\quad
\varphi\in C^\infty(M).
\end{equation}
In particular, $\pi^*:\mathbf H^1(\Sigma;\mathbb C)\to \mathbf H^1(M;\mathbb C)$ is an isomorphism.
\end{lemm}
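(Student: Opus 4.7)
The plan is to exploit the principal $S^1$-action on $M=S^*\Sigma$ given by rotation in the cotangent fibers (using the metric $g$). Let $Y$ be the vertical generating vector field and $R_\theta:=\exp(\theta Y)$ the resulting $2\pi$-periodic flow. First, I would average,
\[
\bar{\mathbf u}:=\frac{1}{2\pi}\int_0^{2\pi}R_\theta^*\mathbf u\,d\theta.
\]
Cartan's formula combined with $d\mathbf u=0$ gives $\tfrac{d}{d\theta}R_\theta^*\mathbf u = d(R_\theta^*\iota_Y\mathbf u)$, so integration in $\theta$ produces $\varphi_0\in C^\infty(M)$ with $\mathbf u-\bar{\mathbf u}=d\varphi_0$. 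Thus $\bar{\mathbf u}$ is closed, $S^1$-invariant, and cohomologous to $\mathbf u$, and it suffices to write $\bar{\mathbf u}=\pi^*\mathbf v$ for some closed $\mathbf v$ on $\Sigma$.

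Since $\mathcal L_Y\bar{\mathbf u}=0$ and $d\bar{\mathbf u}=0$, Cartan's formula gives $d(\iota_Y\bar{\mathbf u})=0$, so by connectedness of $M$ the function $\iota_Y\bar{\mathbf u}=c$ is a constant. I would then fix a connection 1-form $\omega\in C^\infty(M;\Omega^1)$ for the circle bundle $\pi:M\to\Sigma$, that is $\iota_Y\omega=1$ and $\mathcal L_Y\omega=0$; such an $\omega$ exists, e.g.\ via the Levi-Civita connection on $\Sigma$ or a partition of unity. Both $\bar{\mathbf u}-c\omega$ and $d\omega$ are $S^1$-invariant and annihilated by $\iota_Y$, hence basic: $\bar{\mathbf u}-c\omega=\pi^*\mathbf v$ for some $\mathbf v\in C^\infty(\Sigma;\Omega^1)$ and $d\omega=\pi^*e$ for some $e\in C^\infty(\Sigma;\Omega^2)$. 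The cohomology class $[e]/(2\pi)$ is the Euler class of the circle bundle $S^*\Sigma\to\Sigma$, which equals $\chi(\Sigma)$, and Gauss--Bonnet gives $\int_\Sigma e=\pm 2\pi\chi(\Sigma)\neq 0$.

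The key step is to deduce $c=0$. Applying $d$ to $\bar{\mathbf u}=c\omega+\pi^*\mathbf v$ and using injectivity of $\pi^*$ on smooth forms (since $\pi$ is a submersion) yields $d\mathbf v=-ce$; integrating over $\Sigma$ and applying Stokes gives $c\int_\Sigma e=0$, hence $c=0$. Therefore $\bar{\mathbf u}=\pi^*\mathbf v$ with $d\mathbf v=0$, so $\mathbf u=\pi^*\mathbf v+d(-\varphi_0)$, proving \eqref{e:topo}.

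For the isomorphism statement, surjectivity of $\pi^*\colon\mathbf H^1(\Sigma;\mathbb C)\to\mathbf H^1(M;\mathbb C)$ is immediate from the above. For injectivity, if $\pi^*\mathbf v=d\varphi$ on $M$ with $\mathbf v$ closed, then averaging in $\theta$ yields $\pi^*\mathbf v=d\bar\varphi$ with $\bar\varphi$ $S^1$-invariant, hence $\bar\varphi=\pi^*\psi$ for some $\psi\in C^\infty(\Sigma)$, and $\pi^*$-injectivity on forms forces $\mathbf v=d\psi$. The only nontrivial input in this argument is the nonvanishing of $\int_\Sigma e$, which is exactly where the hypothesis $\chi(\Sigma)\neq 0$ enters; everything else is routine fiber averaging.
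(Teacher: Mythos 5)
Your proof is correct, and the core mechanism is the same as in the paper: the only genuine obstruction is a single fiber period $c$, which is killed by pairing against the (nonzero) Euler class of $S^*\Sigma\to\Sigma$ via Stokes and Gauss--Bonnet. The packaging, however, is somewhat different. The paper works in explicit local coordinates $(x_1,x_2,\theta)$ and tries directly to solve the fiberwise ODE $V\varphi=\iota_V\mathbf u$; the period $c$ appears as the solvability obstruction for that ODE, and it is shown to vanish by pairing $\mathbf u$ with the explicit form $\pi^*(K\,d\vol_\Sigma)=-dV^*$. You instead average $\mathbf u$ over the $S^1$-action to produce an invariant representative $\bar{\mathbf u}$ of the same class (using Cartan's formula to see that the difference is exact), identify $c=\iota_Y\bar{\mathbf u}$, subtract $c\omega$ for a connection form $\omega$ to land in the basic complex, and then read off $c\int_\Sigma e=0$ from $d\mathbf v=-ce$. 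This is cleaner and more invariant: it avoids coordinate formulas entirely and makes visible that the result holds for any oriented circle bundle with nonzero Euler number, not just $S^*\Sigma$ with the canonical contact form. One minor thing worth spelling out if you wrote this up carefully: the double integral defining $\varphi_0$ is indeed a well-defined smooth function on $M$ (by Fubini it equals $\frac{1}{2\pi}\int_0^{2\pi}(2\pi-s)\,R_s^*\iota_Y\mathbf u\,ds$), so no periodicity condition is needed at that stage -- the obstruction is deferred to the constancy and vanishing of $\iota_Y\bar{\mathbf u}$, exactly as you say.
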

\begin{proof}
For computations below, we will use positively oriented local coordinates $(x_1,x_2)$ on $\Sigma$
in which the metric has the form
$g=e^{2\psi}(dx_1^2+dx_2^2)$,
for some smooth real-valued function $\psi$. The corresponding coordinates
on $M$ are $(x_1,x_2,\theta)$ with the covector
given by $\xi=e^{\psi}(\cos\theta,\sin\theta)$.
Let $V$ be the vector field on $M$ which generates rotations in the fibers of $\pi$.
In local coordinates, we have $V=\partial_\theta$.
To show~\eqref{e:topo} it suffices to find $\varphi\in C^\infty(M)$ such that
\begin{equation}
  \label{e:topo-p1}
V\varphi=\iota_V\mathbf u.
\end{equation}
Indeed, put $\mathbf w:=\mathbf u-d\varphi$. Then $d\mathbf w=0$
and $\iota_V\mathbf w=0$. A calculation in local coordinates
shows that $\mathbf w=\pi^*\mathbf v$
for some $\mathbf v\in C^\infty(\Sigma;\Omega^1)$
such that $d\mathbf v=0$.

A smooth solution to~\eqref{e:topo-p1} exists if
$\mathbf u$ integrates to 0 on each fiber of $\pi$.
Since~$\mathbf u$ is closed
and all fibers are homotopic to each other, the integral
of $\mathbf u$ along each fiber is equal to some constant $c\in\mathbb C$,
thus it remains to show that $c=0$.

Let $K\in C^\infty(\Sigma)$ be the Gaussian curvature of $\Sigma$ and
$d\vol_\Sigma\in C^\infty(\Sigma;\Omega^2)$ the volume form
of $(\Sigma,g)$, written in local coordinates as
$d\vol_\Sigma=e^{2\psi}dx_1\wedge dx_2$.
With $\chi(\Sigma)\neq 0$ denoting the Euler characteristic of $\Sigma$, we have
by Gauss--Bonnet theorem
$$
\int_M\mathbf u\wedge \pi^*(K\,d\vol_\Sigma)=2\pi \chi(\Sigma)\cdot c.
$$
It then remains to prove that $
\int_M \mathbf u\wedge \pi^* (K\, d\vol_\Sigma)=0$.
This follows via integration by parts from the identity
$\pi^*(K\,d\vol_\Sigma)=-dV^*$,
where $V^*\in C^\infty(M;\Omega^1)$ is the connection form, namely the unique
1-form satisfying the relations
$$
\iota_V V^*=1,\quad
d\alpha=V^*\wedge \beta,\quad
d\beta=-V^*\wedge \alpha,
$$
where $\alpha$ is the contact form 
and $\beta$ is the pullback of $\alpha$
by the $\pi/2$ rotation in the fibers of $\pi$. This can be checked
in local coordinates using the formulas $\alpha=e^\psi(\cos\theta\,dx_1+\sin\theta\,dx_2)$,
$\beta=e^{\psi}(-\sin\theta\,dx_1+\cos\theta\,dx_2)$,
$V^*=\partial_{x_1}\psi\,dx_2-\partial_{x_2}\psi\,dx_1+d\theta$,
$K=-e^{-2\psi}\Delta\psi$; see also~\cite[\S3]{g-k}.

Having established~\eqref{e:topo}, we see immediately
that $\pi^*:\mathbf H^1(\Sigma;\mathbb C)\to\mathbf H^1(M;\mathbb C)$
is onto. To show that $\pi^*$ is one-to-one, assume
that $\mathbf v\in C^\infty(\Sigma;\Omega^1)$ satisfies
$\pi^*\mathbf v=d\varphi$ for some $\varphi\in C^\infty(M)$.
Then $V\varphi=\iota_Vd\varphi=0$, therefore $\varphi=\pi^* \chi$
for some $\chi\in C^\infty(\Sigma)$ and $\mathbf v=d\chi$ is exact.
\end{proof}

\section{Proof}
\label{proof}

In this section we prove the main theorem in a slightly more general 
setting~-- see Proposition \ref{l:main}. We assume throughout that
$M$ is a three-dimensional connected compact manifold, $\alpha$ is a contact form on $M$, and
$X$ is the Reeb vector field of $\alpha$ generating an Anosov flow (see~\S\S\ref{s:resonances},\ref{s:gf}).
For the application to zeta functions we also assume that the corresponding 
stable/unstable bundles $E_u,E_s$ are orientable.

\subsection{Zeta function and Pollicott--Ruelle resonances}
\label{s:zeta}

For $k=0,1,2$, let $\Omega^k_0\subset\Omega^k$ be the bundle of exterior $k$-forms $\mathbf u$
on $M$ such that $\iota_X \mathbf u=0$.
Consider the following operator satisfying~\eqref{e:principal-part}:
$$
\mathbf P_k:=-i\mathcal L_X:\mathcal D'(M;\Omega^k_0)\to\mathcal D'(M;\Omega^k_0).
$$
Note that by Cartan's formula
$$
\mathbf P_k\mathbf u=-i\,\iota_X(d\mathbf u),\quad
\mathbf u\in\mathcal D'(M;\Omega^k_0).
$$
As discussed in~\S\ref{s:resonances} we may consider Pollicott--Ruelle resonances
associated to the operators $\mathbf P_k$, denoting their multiplicities
as follows:
$$
m_k(\lambda):=m_{\mathbf P_k}(\lambda)\in\mathbb N_0,\quad
\lambda\in\mathbb C.
$$
The connection with the Ruelle zeta function comes from the following standard
formula (see \cite[(2.5) and~\S 4]{DZ})
for the meromorphic continuation of $\zeta_R$:
$$
\zeta_R(s)={\zeta_1(s )\over \zeta_0( s )\zeta_2(s  )},\quad
s\in\mathbb C.
$$
(It is here that we the assumption that the stable and unstable bundle are orientable.)
Here each $\zeta_k(s )$ is an entire function having a zero of multiplicity
$m_k( i s )$ at each $ s \in\mathbb C$. Therefore, $\zeta_R(s )$ has a zero at
$s =0$ of multiplicity
\begin{equation}
  \label{e:mul-ultimate}
m_R(0):=m_1(0)-m_0(0)-m_2(0).
\end{equation}
By Lemma~\ref{l:res-states} the multiplicity $m_k(0)$ can be calculated as
\begin{equation}
  \label{e:dim-mult}
m_k(0)=\dim\Res_k(0),
\end{equation}
where $\Res_k(0)$ is the space of resonant states at zero,
\begin{equation}
\label{eq:Resk}
\Res_k(0)=\{\mathbf u\in \mathcal D'_{E_u^*}(M;\Omega^k)\colon \iota_X\mathbf u=0,\
\iota_X(d\mathbf u)=0\}
\end{equation}
provided that the semisimplicity condition~\eqref{e:rs-condition} is satisfied:
\begin{equation}
  \label{e:algsim}
\mathbf u\in \mathcal D'_{E_u^*}(M;\Omega^k),\quad
\iota_X\mathbf u=0,\quad
\iota_X(d\mathbf u)\in \Res_k(0)\quad\Longrightarrow\quad
\iota_X(d\mathbf u)=0.
\end{equation}
The main result of this section is
\begin{prop}
\label{l:main}
In the notation of \eqref{eq:Resk} we have
\begin{enumerate}
\item $\dim\Res_0(0)=\dim\Res_2(0)=1$;
\item $\dim\Res_1(0)$ is equal to the Betti number $\mathbf b_1(M)$ defined in~\eqref{e:Betti};
\item the condition~\eqref{e:algsim} holds for $k=0,1,2$.
\end{enumerate}
\end{prop}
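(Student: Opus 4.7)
I would prove the three parts in order, with Lemma~\ref{l:mystery} serving as the regularity workhorse. The key preliminary observation is that since the Reeb field~$X$ preserves the smooth volume $d\vol_M=\alpha\wedge d\alpha$, the operator $\mathbf P_0=-iX$ is formally self-adjoint on $L^2(M,d\vol_M)$, and for any distribution $\psi\in\mathcal D'_{E_u^*}(M)$ antisymmetry makes $\langle X\psi,\psi\rangle$ purely imaginary, so $\Im\langle\mathbf P_0\psi,\psi\rangle=0\geq 0$; hence whenever $X\psi\in C^\infty$, Lemma~\ref{l:mystery} promotes $\psi$ to~$C^\infty$. Part~(3) for $k=0$ is then immediate: if $X\mathbf u=c$ is constant, pairing with $1\in C^\infty$ gives $c\,\vol(M)=\langle X\mathbf u,1\rangle=-\langle\mathbf u,X1\rangle=0$, so $c=0$. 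Writing $\mathbf u=f\,d\alpha$ reduces the $k=2$ cases of parts~(1) and~(3) to the scalar analogue on~$f$, since $f\,d\vol_M=\mathbf u\wedge\alpha$ gives $\WF(f)\subset E_u^*$.

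For part~(1), $\mathbf u\in\Res_0(0)$ satisfies $X\mathbf u=0$, hence $\mathbf u\in C^\infty$ by the above principle, and ergodicity of the Anosov flow with its smooth invariant measure forces $\mathbf u$ constant, so $\dim\Res_0(0)=\dim\Res_2(0)=1$.

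For part~(2), I would construct an isomorphism $\Phi\colon\Res_1(0)\to\mathbf H^1(M;\mathbb C)$. The structural observation: for $\mathbf v\in\Res_1(0)$, the 2-form $d\mathbf v$ has $\iota_X(d\mathbf v)=0$, so $d\mathbf v=g\,d\alpha$ for some $g\in\mathcal D'_{E_u^*}(M)$; applying~$d$ and using~\eqref{e:identity} yields $Xg=0$, so part~(1) gives $g=c$ constant. Then $\mathbf v-c\alpha$ is closed and Lemma~\ref{l:de-rham} writes $\mathbf v-c\alpha=v_0+d\psi$ with $v_0$ smooth closed; set $\Phi(\mathbf v):=[v_0]$. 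Well-definedness follows from the regularity principle applied to the potential~$\psi$. Surjectivity: given a smooth closed $v_0$, the constant $c:=-(\vol M)^{-1}\int v_0\wedge d\alpha$ is exactly the one for which $X\varphi=-c-\iota_X v_0$ is solvable in $\mathcal D'_{E_u^*}$ (the compatibility matches the left kernel of~$X$ being constants), and $v_0+d\varphi+c\alpha\in\Res_1(0)$ maps to $[v_0]$. Injectivity: $\Phi(\mathbf v)=0$ yields $\mathbf v=c\alpha+d\Psi$ with $X\Psi=-c$; integrating as above forces $c=0$, and then $\Psi$ is constant by part~(1), so $\mathbf v=0$.

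The main obstacle is part~(3) for $k=1$: given $\mathbf u\in\mathcal D'_{E_u^*}(M;\Omega^1_0)$ with $\mathbf v:=\mathcal L_X\mathbf u\in\Res_1(0)$, show $\mathbf v=0$. First run the part~(2) analysis on $d\mathbf u=\alpha\wedge\mathbf v+g\,d\alpha$: the identity $d^2\mathbf u=0$ combined with $d\mathbf v=c_{\mathbf v}d\alpha$ and~\eqref{e:identity} gives $Xg=c_{\mathbf v}$; integration against~$1$ kills $c_{\mathbf v}$ (so $\mathbf v$ is closed), while $\int d\mathbf u\wedge\alpha=\int\iota_X\mathbf u\,d\vol_M=0$ kills $c_g$, leaving $d\mathbf u=\alpha\wedge\mathbf v$. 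Now apply Lemma~\ref{l:de-rham} to the closed~$\mathbf v$ and Lemma~\ref{l:mystery} to the resulting potential to conclude $\mathbf v\in C^\infty$; then $d\mathbf u=\alpha\wedge\mathbf v$ is smooth, so the same bootstrap applied to~$\mathbf u$ gives $\mathbf u\in C^\infty$. Finally, $\mathcal L_X\mathbf u=\mathbf v$ and $\mathcal L_X\mathbf v=0$ yield $(e^{tX})^*\mathbf u=\mathbf u+t\mathbf v$ pointwise. Since the Anosov property makes $(e^{tX})^*$ contract $E_u^*$ exponentially as $t\to+\infty$, projecting at a point $x$ gives $|(\mathbf u+t\mathbf v)^u(x)|\leq Ce^{-\nu t}\|\mathbf u\|_\infty$; linear growth in $t$ on the left forces the $E_u^*$-component of~$\mathbf v$ to vanish, and the symmetric argument on $E_s^*$ with $t\to-\infty$ kills the other component, so $\mathbf v=0$.
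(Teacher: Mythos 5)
Your treatment of parts (1), (2) and the $k=0,2$ cases of (3) is essentially sound (with small stylistic differences: you invoke ergodicity where the paper gives a direct elementary argument that $du=\varphi\alpha$ forces $du=0$; you carry a $c\alpha$ correction in the de~Rham map while the paper shows first, via Stokes, that every element of $\Res_1(0)$ is closed). The serious problem is in part~(3) for $k=1$.

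Your ``key preliminary observation'' --- that for \emph{any} $\psi\in\mathcal D'_{E_u^*}(M)$ with $X\psi\in C^\infty$ the quantity $\langle X\psi,\psi\rangle$ is purely imaginary, and hence Lemma~\ref{l:mystery} promotes $\psi$ to $C^\infty$ automatically --- is false. The integration by parts $\langle X\psi,\psi\rangle=-\langle\psi,X\psi\rangle$ is justified by $X^*=-X$ only when $\psi$ itself has enough regularity (say $\psi\in L^2$); the formula defining $X$ on distributions pairs $X\psi$ against \emph{test} functions, and $\psi$ is not one. If the claim held, Lemma~\ref{l:mystery} would apply to every $\psi\in\mathcal D'_{E_u^*}$ with $X\psi$ smooth, which is precisely the non-automatic fact the lemma is there to certify under an extra sign hypothesis: the inequality $\Im\langle\mathbf P\mathbf u,\mathbf u\rangle\geq0$ is the ingredient you must supply, not a free consequence of formal self-adjointness. (For an actual resonant state $\mathbf P\mathbf u=0$ this is trivially zero; for a generalized one, or for a potential $\varphi$, it is not.)

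This matters exactly where you use it. In your semisimplicity argument for $k=1$ you write $\mathbf v=v_0+d\varphi$ with $v_0$ smooth closed and $\varphi\in\mathcal D'_{E_u^*}$, and then wave Lemma~\ref{l:mystery} at $\varphi$. But $\Re\langle X\varphi,\varphi\rangle\leq 0$ is not automatic; it has to be \emph{derived}. The paper does so in~\eqref{eq:zeroprod}: starting from $d\mathbf u=\alpha\wedge\mathbf v$ (which you correctly obtained), $d\mathbf w=0$, and $\iota_X\mathbf v=0$, a chain of Stokes identities gives $0=\Re\int d\mathbf u\wedge\overline{\mathbf w}=\cdots=-\Re\langle X\varphi,\varphi\rangle$, and only then can Lemma~\ref{l:mystery} be invoked. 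Your proposal silently skips this computation, which is the heart of Lemma~\ref{l:a-s}. A secondary issue: your closing step requires $\mathbf u\in C^\infty$ (via another ``bootstrap'' which again rests on the same false observation), while the paper avoids this entirely --- once $\mathbf v$ is smooth, the contraction argument applied directly to $(e^{tX})^*\mathbf v=\mathbf v$ with $\iota_X\mathbf v=0$ already gives $\mathbf v=0$, so smoothness of $\mathbf u$ is never needed.
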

It is direct to see that Proposition~\ref{l:main} implies the main theorem when
applied to the case $M=S^*\Sigma$ discussed in~\S\ref{s:gf} (strictly speaking,
to each connected component of $\Sigma$). Indeed, $X$ generates an Anosov flow
since $\Sigma$ is negatively curved (see for example~\cite[Theorem~3.9.1]{k}),
the stable/unstable bundles are orientable since $\Sigma$ is orientable and $m_R(0)=\mathbf b_1(M)-2$ equals to $-\chi(\Sigma)$ by Lemma~\ref{l:topo}.

\subsection{Scalars and 2-forms}

We start the proof of Proposition~\ref{l:main} by considering the cases $k=0$
and $k=2$:
\begin{lemm}
  \label{l:scalars}
We have
\begin{equation}
\Res_0(0)=\{c\colon c\in\mathbb C\},\quad
\Res_2(0)=\{c\,d\alpha\colon c\in\mathbb C\},
\end{equation}
and~\eqref{e:algsim} holds for $k=0,2$, that is the resonance at 0 for 
$ k=0,2$ is simple. 
\end{lemm}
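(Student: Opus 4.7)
The plan is to dispatch $k=0$ by invoking Lemma~\ref{l:mystery}, then to reduce $k=2$ to $k=0$ via the global trivialization of $\Omega^2_0$ by $d\alpha$, and finally to verify the semisimplicity condition~\eqref{e:algsim} in both cases by testing against the constant function $1 \in C^\infty(M)$.

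For $k=0$, the crucial observation will be that the Reeb field $X$ preserves the contact volume $d\vol_M = \alpha \wedge d\alpha$. Indeed, Cartan's formula together with~\eqref{eq:dReeb} gives $\mathcal L_X \alpha = d(\iota_X \alpha) + \iota_X d\alpha = 0$ and $\mathcal L_X d\alpha = d\iota_X d\alpha = 0$. Consequently $\mathbf P_0 = -iX$ is formally self-adjoint on $L^2(M, d\vol_M)$. Any $u \in \Res_0(0)$ then satisfies $\mathbf P_0 u = 0 \in C^\infty(M)$ and $\Im\langle \mathbf P_0 u, u\rangle_{L^2} = 0$, so Lemma~\ref{l:mystery} yields $u \in C^\infty(M)$. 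Since the smooth-volume-preserving Anosov flow $e^{tX}$ on the connected manifold $M$ is ergodic by Anosov's theorem, the only smooth $X$-invariant function is a constant, which gives $\Res_0(0) = \mathbb C$.

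For $k=2$, I would use that $\Omega^2_0$ is a smooth line bundle globally trivialized by $d\alpha$: the nonvanishing of $d\alpha$ on this subbundle is the contact condition itself. Any $\mathbf u = f\,d\alpha$ with $f \in \mathcal D'(M)$ satisfies $\iota_X d\mathbf u = \iota_X(df \wedge d\alpha) = (Xf)\,d\alpha$ once $\mathcal L_X d\alpha = 0$ is used, and the wavefront condition $\mathbf u \in \mathcal D'_{E_u^*}(M;\Omega^2)$ is equivalent to $f \in \mathcal D'_{E_u^*}(M)$ because $d\alpha$ is smooth and nowhere vanishing as a section of $\Omega^2_0$. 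Hence $\Res_2(0) = \{f\,d\alpha\colon f \in \Res_0(0)\} = \mathbb C\cdot d\alpha$ by the previous step.

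Semisimplicity at $k=0$ will be the cleanest step: if $u \in \mathcal D'_{E_u^*}(M)$ and $Xu = c \in \mathbb C$, then pairing both sides as distributions against $1 \in C^\infty(M)$ via the duality induced by $d\vol_M$ and using that the formal adjoint of $X$ is $-X$ yields $c\cdot\vol(M) = \langle Xu, 1\rangle = -\langle u, X\cdot 1\rangle = 0$, so $c = 0$. The $k=2$ semisimplicity reduces to this under the $d\alpha$ trivialization. No single step here looks like a serious obstacle; the conceptual content is entirely loaded into the regularity output of Lemma~\ref{l:mystery} combined with ergodicity of smooth-measure-preserving Anosov flows, and the fact that the contact structure automatically supplies both the invariant volume and the trivialization of $\Omega^2_0$.
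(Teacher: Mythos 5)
Your proof is correct, and the overall architecture (regularity from Lemma~\ref{l:mystery}, reduction of $k=2$ to $k=0$ via the $d\alpha$ trivialization, and semisimplicity by pairing against $1$) matches the paper. The one genuine divergence is the step from $u \in C^\infty(M)$ with $Xu=0$ to $u$ being constant: you invoke Anosov's ergodicity theorem for smooth-volume-preserving Anosov flows, whereas the paper gives a short direct argument. Namely, the paper differentiates the identity $u = u \circ e^{tX}$ along a stable or unstable vector and lets $t \to \pm\infty$; the contraction estimate~\eqref{e:Anosov} forces $du$ to vanish on $E_u \oplus E_s = \ker\alpha$, so $du = \varphi\alpha$ for some $\varphi$, and the contact condition $\alpha\wedge d\alpha \neq 0$ (together with $d(du)=0$) kills $\varphi$, giving $du=0$ and hence $u$ constant by connectedness.

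The comparison is worth noting: your route leans on a deep external result (Hopf's argument plus absolute continuity of the stable/unstable foliations), while the paper's is elementary and self-contained given the Anosov definition, using nothing beyond the contraction estimate and the contact structure. The paper's argument also localizes better --- it shows $du=0$ pointwise rather than appealing to a measure-theoretic statement and then upgrading a.e.\ constancy to constancy --- and, as remarked after Lemma~\ref{l:scalars}, the same differentiation trick extends to nonzero real $\lambda$ and to mixing, which ergodicity alone would not give directly. Neither approach is wrong; yours is faster if one is willing to cite Anosov's theorem, but the paper's is the cleaner choice in a self-contained microlocal treatment. Everything else in your proposal --- that $\mathcal L_X\alpha = \mathcal L_X d\alpha = 0$ makes $-iX$ symmetric on $L^2(M,d\vol_M)$, that $\iota_X\mathbf u = 0$ for a $2$-form forces $\mathbf u = f\,d\alpha$ with $f \in \mathcal D'_{E_u^*}$, and the integration-by-parts proof of~\eqref{e:algsim} --- agrees with the paper.
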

\Remark The argument for $\Res_0(0)$ applies to any contact Anosov flow on
a compact connected manifold. It can be generalized
to show that $\Res_0(0)$ consists of constant functions and $\Res_0(\lambda)$ is trivial
for all $\lambda\in\mathbb R\setminus 0$. This in particular
implies that the flow is mixing.
\begin{proof}
We first handle the case of $\Res_0(0)$. Clearly this space contains constant functions,
therefore we need to show that
\begin{equation}
  \label{e:const}
u\in\mathcal D'_{E_u^*}(M),\quad
Xu=0\quad\Longrightarrow\quad u=c\quad\text{for some }c\in\mathbb C.
\end{equation}
By Lemma~\ref{l:mystery} we have $u\in C^\infty(M)$.
Since $Xu=0$ we have $u=u\circ e^{tX}$ and thus
$$
\langle du(x),v\rangle=\langle du(e^{tX}(x)),de^{tX}(x)\cdot v\rangle\quad\text{for all }
(x,v)\in TM,\ t\in\mathbb R.
$$
Now if $v\in E_s(x)$ then taking the limit as $t\to \infty$ and using~\eqref{e:Anosov}
we obtain
$\langle du(x),v\rangle=0$. Similarly if $v\in E_u(x)$ then
the limit $t\to -\infty$ gives
$\langle du(x),v\rangle=0$. Therefore $du|_{E_u\oplus E_s}=0$. However
$E_u\oplus E_s=\ker \alpha$, thus we have for some $\varphi\in C^\infty(M)$,
$$
du=\varphi\alpha.
$$
We have $0=\alpha\wedge d(\varphi\alpha)=\varphi\alpha\wedge d\alpha$,
thus $du=0$, implying~\eqref{e:const} since $M$ is connected.

Next, \eqref{e:algsim} holds for $k=0$. Indeed, if $u\in\mathcal D'_{E_u^*}(M)$
then
$$
\int_M (Xu)\,d\vol_M=0,
$$
implying that $Xu$ cannot be a nonzero element of $\Res_0(0)$.

Now, assume that $\mathbf u\in \mathcal D'_{E_u^*}(M;\Omega^2)$
satisfies $\iota_X\mathbf u=0$. Then $\mathbf u$ can be written as
$$
\mathbf u=u\,d\alpha, \quad
u\in \mathcal D'_{E_u^*}(M);\quad
\iota_X(d\mathbf u)=(Xu)d\alpha.
$$
Therefore the case of $\Res_2(0)$ follows immediately from that of $\Res_0(0)$.
\end{proof}
Lemma~\ref{l:scalars} implies solvability
of the equation $Xu=f$ in the class $\mathcal D'_{E_u^*}$:
\begin{prop}
  \label{l:scalar-invertibility}
Assume that $f\in C^\infty(M)$ and $\int_M f\,d\vol_M=0$. Then
there exists $u\in \mathcal D'_{E_u^*}(M)$ such that
$Xu=f$.
\end{prop}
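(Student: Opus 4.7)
The plan is to invert $X$ using the regular part of the Laurent expansion of the resolvent of $\mathbf P_0:=-iX$ at the Pollicott--Ruelle resonance $\lambda=0$.

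By \eqref{e:mercon}, $\mathbf R_0(\lambda):=(\mathbf P_0-\lambda)^{-1}$ admits a meromorphic continuation, and Lemma~\ref{l:scalars} shows that the resonance at $0$ is simple with $\Res_0(0)=\mathbb C\cdot 1$. Hence the Laurent expansion \eqref{eq:proj} reduces to
\begin{equation*}
\mathbf R_0(\lambda)=\mathbf R_H(\lambda)-\frac{\Pi}{\lambda},
\end{equation*}
where $\mathbf R_H$ is holomorphic at $0$, $\Pi$ is a rank-one operator with range $\mathbb C\cdot 1$ satisfying $\mathbf P_0\Pi=0$, and both $\mathbf R_H(0)$ and $\Pi$ act continuously on $\mathcal D'_{E_u^*}(M)$. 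Substituting into the identity $(\mathbf P_0-\lambda)\mathbf R_0(\lambda)=I$ and evaluating at $\lambda=0$ yields $\mathbf P_0\mathbf R_H(0)=I-\Pi$, equivalently
\begin{equation*}
X\bigl(i\mathbf R_H(0)f\bigr)=f-\Pi f,\qquad f\in C^\infty(M).
\end{equation*}

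It remains to identify $\Pi f$ with a constant multiple of $\int_M f\,d\vol_M$. Since $X$ is the Reeb field of the contact form $\alpha$, Cartan's formula gives $\mathcal L_X\alpha=\iota_X d\alpha+d(\iota_X\alpha)=0$, and hence $\mathcal L_X(\alpha\wedge d\alpha)=\mathcal L_X d\vol_M=0$. Thus $X$ is antisymmetric on $L^2(M,d\vol_M)$, so for any $v\in\mathcal D'(M)$ the pairing against the smooth function $1$ satisfies $\langle Xv,1\rangle=-\langle v,X\cdot 1\rangle=0$. Applied to $v:=\mathbf R_H(0)f$ and combined with the identity above, this forces
\begin{equation*}
0=\int_M f\,d\vol_M-\Pi f\cdot\int_M d\vol_M,
\end{equation*}
so the constant $\Pi f$ is proportional to $\int_M f\,d\vol_M$. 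Under the hypothesis $\int_M f\,d\vol_M=0$ we obtain $\Pi f=0$, and $u:=i\mathbf R_H(0)f\in\mathcal D'_{E_u^*}(M)$ satisfies $Xu=f$, as required.

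The only delicate step is the distributional identity $\langle Xv,1\rangle=0$ used in pinning down $\Pi$. It is clean because the test function $1$ is smooth on the compact manifold $M$ and $X$ is a smooth vector field preserving $d\vol_M$, so the pairing is unambiguous and no wavefront analysis of $v=\mathbf R_H(0)f$ is required. Everything else is a routine manipulation of the Laurent expansion \eqref{eq:proj} combined with the simplicity of the resonance at $0$ supplied by Lemma~\ref{l:scalars}.
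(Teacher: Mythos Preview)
Your argument is correct and follows the paper's approach: use the Laurent expansion~\eqref{eq:proj} together with the simplicity of the resonance at $0$ (Lemma~\ref{l:scalars}) to show $\Pi f=0$, and then take $u$ to be the regular part of the resolvent applied to $f$. There is one sign slip: from $\mathbf P_0\mathbf R_H(0)=I-\Pi$ with $\mathbf P_0=-iX$ one gets $X(-i\mathbf R_H(0)f)=f-\Pi f$, so the correct choice is $u:=-i\mathbf R_H(0)f$ (as in the paper), not $i\mathbf R_H(0)f$; this does not affect the conclusion.

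The one genuine difference is in how you pin down $\Pi f$. The paper identifies the full projector by a duality argument: it shows $\mathbf R_0(\lambda)^*=-\mathbf R_{-\mathbf P_0}(-\bar\lambda)$, applies Lemma~\ref{l:scalars} to the reversed field $-X$ to see that $\Ran\Pi^*$ also consists of constants, and combines this with $\Pi(1)=1$ to obtain $\Pi f=\vol(M)^{-1}\int_M f\,d\vol_M$. Your route is more elementary: you use directly that $X$ is divergence-free (so $\langle Xv,1\rangle=0$ for any $v\in\mathcal D'(M)$) together with $Xv=\pm(f-\Pi f)$ to force $\Pi f\cdot\vol(M)=\int_M f\,d\vol_M$. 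This avoids invoking the adjoint resolvent and a second appeal to Lemma~\ref{l:scalars}, at the cost of not exhibiting $\Pi$ explicitly; for the purpose of this proposition your shortcut is perfectly adequate.
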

\begin{proof}
It follows from Lemma~\ref{l:scalars} and the proof of Lemma~\ref{l:res-states} that the 
resolvent $\mathbf R_0(\lambda)$ of the operator $\mathbf P_0=-iX$ defined in~\eqref{e:mercon} has the expansion
$$
\mathbf R_0(\lambda)=\mathbf R_H(\lambda)-{\Pi\over\lambda}
$$
where $\mathbf R_H(\lambda)$ is holomorphic at $\lambda=0$ and the range of $ \Pi $ consists of constant functions.
By analytic continuation from~\eqref{e:uhp}, we see that $\mathbf R_0(\lambda)^*=-\mathbf R_{-\mathbf P_0}(-\bar\lambda)$
where $\mathbf R_{-\mathbf P_0}(\lambda)$ is the resolvent of $-\mathbf P_0$. Applying
Lemma~\ref{l:scalars} to the field $-X$, we see that the range of $\Pi^*$ also consists
of constant functions. By~\eqref{e:reproduce} we have $\Pi(1)=1$, therefore
$$
\Pi u={1\over\vol(M)}\int_M u\,d\vol_M. 
$$
Now, put
$u:=-i\mathbf R_H(0)f$, then $u\in\mathcal D'_{E_u^*}(M)$ by~\eqref{eq:proj}.
Since $\Pi f=0$ and $(\mathbf P_0-\lambda)\mathbf R_0(\lambda)=I$, we have
$Xu=f$.
\end{proof}

\subsection{1-forms} \label{s:1fo}

It remains to show Proposition~\ref{l:main} for the case $k=1$,
that is to analyse the space
$$
\Res_1(0)=\{\mathbf u\in \mathcal D'_{E_u^*}(M,\Omega^1)\colon
\iota_X\mathbf u=0,\
\iota_X(d\mathbf u)=0\}.
$$
The next lemma shows that the $\dim\Res_1(0)=\mathbf b_1(M)$:
\begin{lemm}
  \label{l:1-forms}
Assume that $\mathbf u\in \Res_1(0)$. Then there exists $\varphi\in \mathcal D'_{E_u^*}(M)$
such that
\begin{equation}
  \label{e:de-rham-iso-0}
\mathbf u-d\varphi\in C^\infty(M;\Omega^1),\quad
d(\mathbf u-d\varphi)=0.
\end{equation}
The cohomology class $[\mathbf u-d\varphi]\in \mathbf H^1(M;\mathbb C)$
is independent of the choice of $\varphi$. The map
\begin{equation}
  \label{e:de-rham-iso-1}
\Res_1(0)\ni\mathbf u\ \mapsto\ [\mathbf u-d\varphi]\in \mathbf H^1(M;\mathbb C)
\end{equation}
is a linear isomorphism.
\end{lemm}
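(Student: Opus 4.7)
The plan is to show that $d\mathbf{u} = 0$ for every $\mathbf{u} \in \Res_1(0)$, use Lemma \ref{l:de-rham} to extract $\varphi$, and then verify the isomorphism using Lemma \ref{l:scalars} and Proposition \ref{l:scalar-invertibility}. The main obstacle is the first step: upgrading $\iota_X(d\mathbf{u}) = 0$ to $d\mathbf{u} = 0$. This is where the contact structure enters essentially, together with a distributional Stokes argument to kill a leftover constant.

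Because $M$ is three-dimensional and $\alpha \wedge d\alpha \neq 0$, the smooth 2-form $d\alpha$ trivializes the line bundle of 2-forms annihilated by $\iota_X$. Combined with $\iota_X(d\mathbf{u}) = 0$ this lets me write $d\mathbf{u} = f\, d\alpha$ with $f \in \mathcal{D}'(M)$; wedging with $\alpha$ and using \eqref{e:identity} together with $\iota_X \mathbf{u} = 0$ gives
\[
f\, d\vol_M = \alpha \wedge d\mathbf{u} = -d(\alpha \wedge \mathbf{u}),
\]
from which $f \in \mathcal{D}'_{E_u^*}(M)$. Cartan's formula together with $\mathcal{L}_X(d\alpha) = 0$ gives $Xf \cdot d\alpha = \mathcal{L}_X(d\mathbf{u}) = 0$, so $Xf = 0$ and Lemma \ref{l:scalars} forces $f = c$ to be constant. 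Pairing the displayed identity with the constant function $1$ yields $c\,\vol(M) = -\int_M d(\alpha\wedge\mathbf{u}) = 0$ by distributional Stokes, so $d\mathbf{u} = 0$.

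With $d\mathbf{u} = 0$ smooth, Lemma \ref{l:de-rham} produces $\mathbf{v} \in C^\infty(M; \Omega^1)$ closed and $\varphi \in \mathcal{D}'_{E_u^*}(M)$ with $\mathbf{u} = \mathbf{v} + d\varphi$, establishing \eqref{e:de-rham-iso-0}. If $\varphi'$ is another such choice, then $d(\varphi - \varphi')$ is smooth, and applying Lemma \ref{l:de-rham} at level $k = 0$ (where the space of ``$(-1)$-forms'' is trivial) shows that $\varphi - \varphi'$ itself is smooth, so the cohomology class in \eqref{e:de-rham-iso-1} is well-defined. For injectivity: if $\mathbf{u} - d\varphi = d\psi$ with $\psi \in C^\infty$, then $\mathbf{u} = d(\varphi+\psi)$ and $X(\varphi+\psi) = \iota_X \mathbf{u} = 0$, so Lemma \ref{l:scalars} makes $\varphi + \psi$ constant and $\mathbf{u} = 0$. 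For surjectivity, given a smooth closed $\mathbf{v} \in C^\infty(M; \Omega^1)$, I check $\int_M \iota_X\mathbf{v}\,d\vol_M = \int_M \mathbf{v}\wedge d\alpha = -\int_M d(\mathbf{v}\wedge\alpha) = 0$ using \eqref{e:identity}, $d\mathbf{v} = 0$, and Stokes; Proposition \ref{l:scalar-invertibility} then delivers $\varphi \in \mathcal{D}'_{E_u^*}(M)$ with $X\varphi = -\iota_X\mathbf{v}$, and $\mathbf{u} := \mathbf{v} + d\varphi$ is the desired preimage in $\Res_1(0)$.
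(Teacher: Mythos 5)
Your proof is correct and follows essentially the same route as the paper: you show $d\mathbf u=0$ by writing $d\mathbf u=f\,d\alpha$ and killing $f$ via $Xf=0$ plus a Stokes argument (the paper phrases this as $d\mathbf u\in\Res_2(0)$ and invokes Lemma~\ref{l:scalars} directly, but the content is identical), then apply Lemma~\ref{l:de-rham} for existence, use Lemma~\ref{l:scalars} for well-definedness and injectivity, and Proposition~\ref{l:scalar-invertibility} for surjectivity. The only cosmetic difference is that you unpack the characterization of $\Res_2(0)$ rather than citing it, and write the Stokes computations via $-d(\alpha\wedge\mathbf u)$ rather than $\int\alpha\wedge d\mathbf u=\int\mathbf u\wedge d\alpha$.
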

\begin{proof}
We first show that
\begin{equation}
  \label{e:it-is-closed}
\mathbf u\in\Res_1(0)\ \Longrightarrow\ d\mathbf u=0.
\end{equation}
Definition~\eqref{eq:Resk} shows that $ d\mathbf u\in \Res_2(0)$ and 
therefore by Lemma~\ref{l:scalars} we have
$d\mathbf u=c\,d\alpha$ for some $c\in\mathbb C$.
From~\eqref{e:identity} and $\iota_X\mathbf u=0$ we also have $\mathbf u\wedge d\alpha=0$,
thus Stokes's theorem gives~\eqref{e:it-is-closed}:
$$
c\vol(M)=\int_M \alpha\wedge d\mathbf u = 
 \int_M \mathbf u\wedge d\alpha
=0.
$$
Lemma~\ref{l:de-rham} and \eqref{e:it-is-closed} imply the existence
of $\varphi\in\mathcal D'_{E_u^*}(M)$ such that~\eqref{e:de-rham-iso-0} holds.
Moreover, if $\tilde \varphi\in \mathcal D'_{E_u^*}(M)$ is another function satisfying~\eqref{e:de-rham-iso-0}
then $d(\varphi-\tilde \varphi)\in C^\infty(M;\Omega^1)$, implying by Lemma~\ref{l:de-rham} that
$\varphi-\tilde \varphi\in C^\infty(M)$. Therefore $\mathbf u-d\varphi$ and $\mathbf u-d\tilde \varphi$
belong to the same de Rham cohomology class, thus the map~\eqref{e:de-rham-iso-1} is well-defined.

Next, assume that~\eqref{e:de-rham-iso-0} holds and
$\mathbf u-d\varphi$ is exact. By changing $\varphi$ we may assume that
$\mathbf u=d\varphi$. Since $\iota_X\mathbf u=0$ we have $X\varphi=0$,
which by Lemma~\ref{l:scalars} implies that $\varphi$ is constant
and thus $\mathbf u=0$. This shows that~\eqref{e:de-rham-iso-1}
is injective.

It remains to show that~\eqref{e:de-rham-iso-1} is surjective.
For that, take a closed $\mathbf v\in C^\infty(M;\Omega^1)$. We need to find $\varphi\in\mathcal D'_{E_u^*}(M)$
such that $\mathbf v+d\varphi\in \Res_1(0)$. This
is equivalent to $\iota_X(\mathbf v+d\varphi)=0$, that is
$
X\varphi=-\iota_X\mathbf v.
$
A solution $\varphi$ to the above equation exists by Proposition~\ref{l:scalar-invertibility}
since \eqref{e:identity} implies
$$
\int_M \iota_X\mathbf v\,d\vol_M= \int_M \mathbf v\wedge d\alpha=\int_M \alpha\wedge d\mathbf v=0.
$$
This finishes the proof.
\end{proof}
To prove Proposition~\ref{l:main} it remains to show the
semisimplicity condition:
\begin{lemm}
  \label{l:a-s}
Suppose that 
$$ \mathbf u\in\mathcal D'_{E_u^*}(M;\Omega^1),\quad
\iota_X\mathbf u=0,\quad
\iota_X(d\mathbf u)=\mathbf v\in \Res_1(0).
$$
Then $\mathbf v=0$, that is, condition \eqref{e:algsim} holds for $k=1$.
\end{lemm}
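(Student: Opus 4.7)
The plan is to exploit the smoothing power of Lemma~\ref{l:mystery} at the scalar level to upgrade $\mathbf u$ and $\mathbf v$ to smooth forms, and then run an Anosov expansion/contraction argument along the flow to force $\mathbf v=0$. As the first step I would upgrade $\mathbf v$. Applying Lemma~\ref{l:1-forms} to $\mathbf v$, write $\mathbf v=\tilde{\mathbf v}+d\psi$ with $\tilde{\mathbf v}\in C^\infty(M;\Omega^1)$ closed and $\psi\in\mathcal D'_{E_u^*}(M)$. The constraint $\iota_X\mathbf v=0$ forces $X\psi=-\iota_X\tilde{\mathbf v}\in C^\infty(M)$. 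Since the contact volume $\alpha\wedge d\alpha$ is $X$-invariant, $\mathbf P_0=-iX$ is self-adjoint on $L^2(M;d\vol_M)$, and the skew-adjointness of $X$ makes $\int(X\psi)\bar\psi\,d\vol_M$ purely imaginary, so $\Im\langle\mathbf P_0\psi,\psi\rangle_{L^2}=0$. Lemma~\ref{l:mystery} then yields $\psi\in C^\infty(M)$, whence $\mathbf v\in C^\infty(M;\Omega^1)$.

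Next I would upgrade $\mathbf u$. Observe that $\iota_X(d\mathbf u-\alpha\wedge\mathbf v)=\mathbf v-\mathbf v=0$, so pointwise $d\mathbf u-\alpha\wedge\mathbf v\in\Omega^2_0$, a rank-one bundle spanned by $d\alpha$; hence $d\mathbf u=\alpha\wedge\mathbf v+f\,d\alpha$ for some $f\in\mathcal D'_{E_u^*}(M)$. Applying $d$ and using $d\mathbf v=0$ from~\eqref{e:it-is-closed} together with~\eqref{e:identity} yields $Xf=0$, so by Lemma~\ref{l:scalars} $f$ is constant. Thus $d\mathbf u$ is smooth, and Lemma~\ref{l:de-rham} gives $\mathbf u=\mathbf u_{\rm sm}+d\chi$ with $\mathbf u_{\rm sm}\in C^\infty(M;\Omega^1)$ and $\chi\in\mathcal D'_{E_u^*}(M)$. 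From $\iota_X\mathbf u=0$ we extract $X\chi=-\iota_X\mathbf u_{\rm sm}\in C^\infty$, so a second application of Lemma~\ref{l:mystery} promotes $\chi$ to a smooth function; hence $\mathbf u\in C^\infty(M;\Omega^1)$.

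Finally, with $\mathbf u,\mathbf v$ smooth I would integrate along the flow. Because $\mathbf v\in\Res_1(0)$ satisfies $\mathcal L_X\mathbf v=0$, it is flow-invariant, so
\[
\frac{d}{dt}(e^{tX})^*\mathbf u=(e^{tX})^*\mathcal L_X\mathbf u=(e^{tX})^*\mathbf v=\mathbf v,
\]
giving $(e^{tX})^*\mathbf u=\mathbf u+t\mathbf v$ for all $t\in\mathbb R$. Evaluating on $w\in E_s(x)$, the left side equals $\mathbf u_{e^{tX}(x)}(de^{tX}(x)\cdot w)$, which is bounded by $\|\mathbf u\|_{C^0}Ce^{-\nu t}|w|\to 0$ as $t\to+\infty$ by~\eqref{e:Anosov}; so $\mathbf u_x(w)+t\mathbf v_x(w)$ must stay bounded, forcing $\mathbf v_x(w)=0$. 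The analogous limit $t\to-\infty$ with $w\in E_u(x)$ gives $\mathbf v|_{E_u}=0$, and combined with $\iota_X\mathbf v=0$ this yields $\mathbf v=0$ on $TM=\mathbb RX\oplus E_u\oplus E_s$. The main obstacle is the smoothness bootstrap for the 1-form $\mathbf u$, since Lemma~\ref{l:mystery} requires a self-adjoint $\mathbf P$ and no $X$-invariant smooth inner product on $\Omega^1_0$ is generally available (the Anosov bundles are only H\"older); the way out is to pass the distributional content through the scalar distributions $\psi,\chi$ from Lemmas~\ref{l:1-forms} and~\ref{l:de-rham}, to which the genuinely self-adjoint $\mathbf P_0=-iX$ does apply.
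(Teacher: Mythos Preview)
Your overall plan (reduce to the scalar operator $\mathbf P_0=-iX$ and invoke Lemma~\ref{l:mystery}) is exactly right, but the argument has a genuine gap at the point where you verify the sign hypothesis of Lemma~\ref{l:mystery}. You write that ``the skew-adjointness of $X$ makes $\int(X\psi)\bar\psi\,d\vol_M$ purely imaginary.'' That deduction is an integration by parts,
\[
\int_M (X\psi)\bar\psi\,d\vol_M \;+\; \int_M \psi\,\overline{X\psi}\,d\vol_M \;=\;\int_M X(|\psi|^2)\,d\vol_M \;=\;0,
\]
which presupposes that $|\psi|^2$ makes sense, i.e.\ that $\psi$ is already regular enough (say $\psi\in L^2$ with $X\psi\in L^2$). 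For $\psi\in\mathcal D'_{E_u^*}(M)$ this is precisely what you are trying to prove, so the step is circular. The same problem recurs verbatim when you apply Lemma~\ref{l:mystery} to~$\chi$.

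The paper's proof is designed exactly to avoid this circularity: it first shows $\alpha\wedge d\mathbf u=0$ (hence $d\mathbf u=\alpha\wedge\mathbf v$) and then computes
\[
-\Re\langle X\varphi,\varphi\rangle_{L^2}
=\Re\int_M \varphi\,\overline{\mathbf w}\wedge d\alpha
=\Re\int_M \alpha\wedge d\varphi\wedge\overline{\mathbf w}
=\Re\int_M d\mathbf u\wedge\overline{\mathbf w}
=0,
\]
using Stokes and $d\mathbf w=0$. The point is that each pairing here is between a distribution and a \emph{smooth} object; one never has to make sense of $\varphi\bar\varphi$. The existence of $\mathbf u$ (not just of $\mathbf v\in\Res_1(0)$) is what makes this chain available, and this is the missing ingredient in your argument. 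Once $\mathbf v$ is known to be smooth, your linear-growth argument $(e^{tX})^*\mathbf u=\mathbf u+t\mathbf v$ is unnecessary: one can argue directly, as in the paper, with the flow-invariant smooth form $\mathbf v$ and the Anosov estimates, so the second bootstrap (making $\mathbf u$ smooth) can be dropped entirely.
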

\begin{proof}
We have $\alpha\wedge d\mathbf u=a\,d\vol_M$
for some $a\in \mathcal D'_{E_u^*}(M)$. By~\eqref{e:identity},
$$
\int_M a\,d\vol_M=\int_M \mathbf u\wedge d\alpha =\int_M \iota_X\mathbf u\,d\vol_M=0.
$$
Moreover since $\mathcal L_X(\alpha)=0$, $\mathcal L_X(d\alpha)=0$, and $d\mathbf v=0$ by~\eqref{e:it-is-closed},
we have
$$
(Xa)\,d\vol_M=\mathcal L_X(\alpha\wedge d\mathbf u)=\alpha\wedge d\mathbf v=0.
$$
Thus $Xa=0$ and Lemma~\ref{l:scalars} gives that $a=0$ and thus
$\alpha\wedge d\mathbf u=0$. This implies
$d\mathbf u= \alpha \wedge \iota_X d \mathbf u = \alpha\wedge \mathbf v$.
Now by Lemma~\ref{l:1-forms} there exist
$$
\mathbf w\in C^\infty(M;\Omega^1),\quad
\varphi\in \mathcal D'_{E_u^*}(M),\quad
\mathbf v=\mathbf w+d\varphi,\quad
d\mathbf w=0.
$$
Since $\iota_X\mathbf v=0$ we have $X\varphi=-\iota_X\mathbf w$.
Integration by parts together with~\eqref{e:identity} gives
\begin{equation}
\label{eq:zeroprod}
\begin{aligned}
0&=\Re \int_M d\mathbf u\wedge\overline{\mathbf w}
=\Re \int_M\alpha\wedge d\varphi\wedge\overline{\mathbf w}\\
&=\Re \int_M\varphi \,\overline{\mathbf w}\wedge d\alpha
=-\Re \langle X\varphi,\varphi\rangle_{L^2}.
\end{aligned}
\end{equation}
By Lemma~\ref{l:mystery} with $\mathbf P=-iX$ this implies $\varphi\in C^\infty(M)$
and thus $\mathbf v\in C^\infty(M;\Omega^1)$.

We can now use the same argument as in the proof of Lemma \ref{l:scalars}:
$ ( e^{tX } )^* \mathbf v = \mathbf v $ and hence 
\[  \langle \mathbf v (x ) , z \rangle = \langle 
\mathbf v ( e^{tX } x ) , d e^{tX } ( x ) \cdot z \rangle , \ \ ( x, z ) \in T M, \ \ t \in \RR . \]
The right hand side tends to zero as $ t \to \infty $ for $ z \in E_s ( x ) $, 
and as $ t \to - \infty $ for $ z \in E_u ( x ) $. Since $ \iota_X \mathbf v = 0 $
it follows that $ \mathbf v = 0 $.
\end{proof}


\end{document}